\newtheorem{theorem}{\bf Theorem}[section]
\newtheorem{proposition}{\bf Proposition}[section]
\newtheorem{lemma}{\bf Lemma}[section]
\newtheorem{corollary}{\bf Corollary}[section]
\newtheorem{remark}{\bf Remark}[section]
\newtheorem{definition}{\bf Definition}[section]
\newtheorem{example}{\bf Example }[section]
\newenvironment{proof}{
	\begin{trivlist}
		\item[\hspace{\labelsep}{\bf\noindent Proof. }] }{\par\hfill $\Box$\end{trivlist}
	\par}
\date{\plain}
\title{
	\huge\bf Generalized Gini's mean difference through distortions and copulas, and related minimizing problems 
	\date{\today}
}
\author{
	\large   
	Marco {\bf Capaldo}$^1$\thanks{ORCID: 0000-0002-0255-8935} \qquad
	Antonio {\bf Di Crescenzo}$^1$\thanks{ORCID: 0000-0003-4751-7341} \qquad 
	Franco {\bf Pellerey} $^2$ \thanks{ORCID: 0000-0002-8983-855X}\\
	\small $^1$ Dipartimento di Matematica, Universit\`a degli Studi di Salerno \\
	\small Via Giovanni Paolo II, 132, I-84084 Fisciano (SA), Italy \\
	\small $^2$ Dipartimento di Scienze Matematiche, Politecnico di Torino \\
	\small Corso Duca degli Abruzzi, 24, I-10129 Torino, Italy \\
	\normalsize Email:  \{mcapaldo, adicrescenzo\}@unisa.it,
	franco.pellerey@polito.it  
}
\begin{document}
	\maketitle
	\abstract{Given a random variable $X$ and considered a family of its possible distortions, we define two new measures of distance between $X$ and each its distortion. For these distance measures, which are extensions of the Gini's mean difference, conditions are determined for the existence of a minimum, or a maximum, within specific families of distortions, generalizing some results presented in the recent literature. 
		
	\medskip\noindent
	{\bf Mathematics Subject Classification}: 60E05, 60E15, 62C05  
	}
		
	\medskip
	\noindent{\bf Keywords:} Distance metrics;  Distortion function; Copula; Proportional hazard model; Proportional reversed hazard model; Gini's mean difference.
	 
\section{Introduction and background}
In the recent literature, minimizing problems of distance between random variables based on suitable measures have been examined (see, for instance, Ortega-Jim\'enez et al.\ \cite{patricia:2023}, Delbaen and Majumdar \cite{DelbaenMajumdar}, and references therein). 
This paper fits into this line of research, presenting new general measures of distance from a random variable $X$ and its distortions, which can be possibly dependent on $X$, and describing conditions for such measures to present a minimum, or a maximum, within specific classes of distortions. For the definition of such distance measures (given in Eqs.\ (\ref{eq:distorted GMD}) and (\ref{eq:copula-distorted GMD}) below), we provide in advance some assumptions on the variables considered, and some useful definitions. 
\par
Throughout the paper, we will assume that all random variables are absolutely continuous and that
all distribution functions and copulas are continuously differentiable. In addition, the terms increasing and decreasing will be used in strict sense. 
Let $X$ be a random variable with cumulative distribution function (c.d.f.) $F(x)=\mathsf{P}(X\le x)$ and survival function (s.f.) $\overline{F}(x)= 1-F(x)$, for $x\in\mathbb{R}$. Let 
$$
l=\inf\{x\in\mathbb{R} : F(x)>0\}, \qquad r=\sup\{x\in\mathbb{R} : \overline{F}(x)>0\}
$$
denote respectively the lower and upper limits of the support of $X$, which may be finite or infinite.
Moreover, given a c.d.f.\ $F(x)$, we will denote with  
$F^{-1}(u)=\sup\{x\colon F(x)\leq u\}$, $u\in [0,1]$, the right-continuous version of its inverse, also named quantile function in statistical framework.  
We recall that, an increasing function $h:[0,1]\rightarrow[0,1]$, such that $h(0)=0$ and $h(1)=1$ is called distortion function. 
The quantities $h(F(x))$ and $h(\overline{F}(x))$ are called distorted c.d.f. and distorted s.f., respectively. 
For more details about distortion functions see for instance Section 2.4 in Navarro \cite{navarro:2022}.
Let $(X,Y)$ be a random vector with joint c.d.f.\ $F_{(X,Y)}(x,y)=\mathsf{P}(X\le x, Y\le y)$, joint s.f.\ $\overline{F}_{(X,Y)}(x,y)=\mathsf{P}(X> x, Y> y)$, marginal c.d.f.'s $F_X(x),F_Y(y)$ and marginal s.f.'s $\overline{F}_X(x)$, $\overline{F}_Y(y)$, for $x,y\in\mathbb{R}$. Thanks to Sklar theorem, one has
$$
F_{(X,Y)}(x,y)=C(F_X(x),F_Y(y)),\qquad\overline{F}_{(X,Y)}(x,y)=\widehat{C}(\overline{F}_X(x),\overline{F}_Y(y)),\qquad x,y\in\mathbb{R},
$$ 
where $C$ and $\widehat{C}$ are copula distribution function and survival copula of $(X,Y)$, respectively. For more details about copula functions see Nelsen \cite{nelsen:2007}. 
We will consider only distorted s.f.'s and survival copulas.
In addition, we recall that, if $X$ and $Y$ are nonnegative random variables with c.d.f.'s $F$ and $G$, respectively, then  $X$ is said to be smaller than $Y$ in the dispersive order, denoted as $X\le_d Y$, if and only if 
(see Section 3.B of Shaked and Shanthikumar \cite{shaked:2007}) 
\begin{equation*}
	F^{-1}(v)-F^{-1}(u) \ge G^{-1}(v)- G^{-1}(u)\qquad
	\hbox{whenever }0<u \leq v<1. 
\end{equation*}
Moreover, if $X$ and $Y$ are absolutely continuous with density functions (d.f.'s) $f$ and $g$, respectively, then  
\begin{equation*}
	X\le_d Y     \quad \hbox{if and only if} \quad f(F^{-1}(u))\ge g(G^{-1}(u))\quad\forall \, u\in(0,1).
\end{equation*}
We recall that, the Gini's mean difference of a random variable $X$ is defined as
\begin{equation}\label{eq:GMD}
\mathsf{GMD}(X)=\mathsf{E}|X-X'|=2\int_{l}^{r}F(x)\overline{F}(x){\rm d}x,
\end{equation}
where $X'$ is an independent copy of $X$.
It represents a special case of some information measures, such as the generalized cumulative entropies and extropies considered in Kattumannil et al.\ \cite{Kattumannil:etal}.
In addition we mention that the following quantities 
\begin{equation}\label{eq:rmd,gini index}
	\mathsf{RMD}(X)=\frac{\mathsf{GMD}(X)}{\mathsf{E}(X)},\qquad\mathsf{G}(X)=\frac{\mathsf{RMD}(X)}{2},
\end{equation}
denote respectively the relative Gini's mean difference and the Gini's index of $X$, provided that the mean is finite and non-zero. 
\par 
Let us now consider a family of distortion functions
\begin{equation}\label{eq:family distortions}
	\mathcal{H}_\alpha=\{h_\alpha:[0,1]\rightarrow[0,1],\,\alpha\in\mathcal{I}\subset\mathbb{R}\}.	
\end{equation}
We will denote with $X_\alpha$ the random variable with s.f.\ $\overline{F}_{X_\alpha}(x)=h_\alpha\left(\overline{F}(x)\right)$, for $x\in\mathbb{R}$, and with  
\begin{equation}\label{eq:family}
	\mathcal{F}_{X,h_\alpha}=\{X_\alpha :\overline{F}_{X_\alpha}(x)=h_\alpha\left(\overline{F}(x)\right),\; \alpha\in\mathcal{I},\; x\in\mathbb{R}\}
\end{equation}
the family of all random variables with distorted s.f.'s from $X$ through $h_\alpha\in\mathcal{H}_\alpha$.
\par
We extend the Gini's mean difference in Eq.\ (\ref{eq:GMD}) in the cases in which the random variables are not necessarily independent or identically distributed. 
In particular, 
for $X$ and $X_\alpha$ independent random variables, we define the \emph{distorted Gini's mean difference of $X$} as
\begin{equation}\label{eq:distorted GMD}
	\eta_X(\alpha)\coloneqq\mathsf{E}|X-X_\alpha|=\int_{l}^{r}\left\{\overline{F}(x)+h_\alpha(\overline{F}(x))\left[1-2\overline{F}(x)\right]\right\}{\rm d}x.
\end{equation}
For any $\alpha\in\mathcal{I}$, given the distortion function $h_\alpha\in\mathcal{H}_\alpha$, let $(X, X_\alpha)$ be a random vector with survival copula in the parametric family
\begin{equation*}
	\mathcal{\hat{C}}^\theta=\{\widehat{C}_\theta\left(u,v\right);\,\theta\in\Theta,\,u,v\in[0,1]\}.
\end{equation*}
By taking into account that $|x_1-x_2|=x_1+x_2-2\min\{x_1,x_2\}$, we define the \emph{copula-distorted Gini's mean difference of $X$} as 
\begin{equation}\label{eq:copula-distorted GMD}
	\nu_X(\theta,\alpha)\coloneqq\mathsf{E}_{\widehat{C}_\theta}|X-X_\alpha|=\int_{l}^{r}\left\{\overline{F}(x)+h_\alpha(\overline{F}(x))-2\,\widehat{C}_\theta\left(\overline{F}(x),h_\alpha\left(\overline{F}(x)\right)\right)\right\}{\rm d}x.
\end{equation}
We aim to study sufficient conditions on the
distortion functions $h_{\alpha}$ in order to prove the existence of the minimum (or the maximum) for 
$\eta_X(\alpha)$. 
Along the same line, we obtain sufficient conditions on the distortion functions $h_{\alpha}$ and the survival copulas $\widehat{C}_\theta$ in order to prove the existence of the minimum (or the maximum) for
$\nu_X(\theta,\alpha)$.
\par
At least when $X$ and $X_\alpha$ have the same support, for a fixed survival copula  $\widehat{C}_\theta$, a false intuition may suggest that the minimum is reached when $X=_{st}X_\alpha$, i.e. $h_{\alpha}(u)=u$, for all $u\in[0,1]$. However, this is not necessarily true, as illustrated in the following example.
\begin{example}\label{ex:1}
	Suppose that the s.f.\ of 
	$X_\alpha$ is obtained  from $X$ through the distortion $h_\alpha(u)=u^\alpha$, for $u\in[0,1]$. If $X$ has exponential distribution with $\mathsf{E}(X)=1$, then 
	\begin{equation}\label{eq:1 intro}
		\eta_X(\alpha)=\frac{1}{\alpha}+\frac{\alpha-1}{\alpha+1},\qquad\alpha>0.
	\end{equation}
	This function reaches minimum in $\alpha=1+\sqrt{2}$, as shown in the left-hand-side of Figure \ref{fig:variability measure, es intro}.
	Moreover, consider the Farlie-Gumbel-Morgenstern family of survival copulas $\widehat{C}_{\theta}(u,v)=uv\left(1+\theta(1-u)(1-v)\right)$, for $u,v\in[0,1]$ and $\theta\in[-1,1]$.
	In these hypothesis, it follows that
	\begin{equation}\label{eq:2 intro}
		\nu_X(\theta,\alpha)=\frac{1}{\alpha}+\frac{\alpha-1}{\alpha+1}-\frac{3\alpha\theta}{2+7\alpha+7\alpha^2+2\alpha^3},\qquad\theta\in[-1,1],\,\alpha>0,
	\end{equation}
	which does not reach minimum in $\alpha=1$, for all $\theta\in[-1,1]$, as shown in right-hand-side of Figure \ref{fig:variability measure, es intro}.
\end{example}
\begin{figure}[t] 
	\begin{center}
		\includegraphics[scale=0.38]{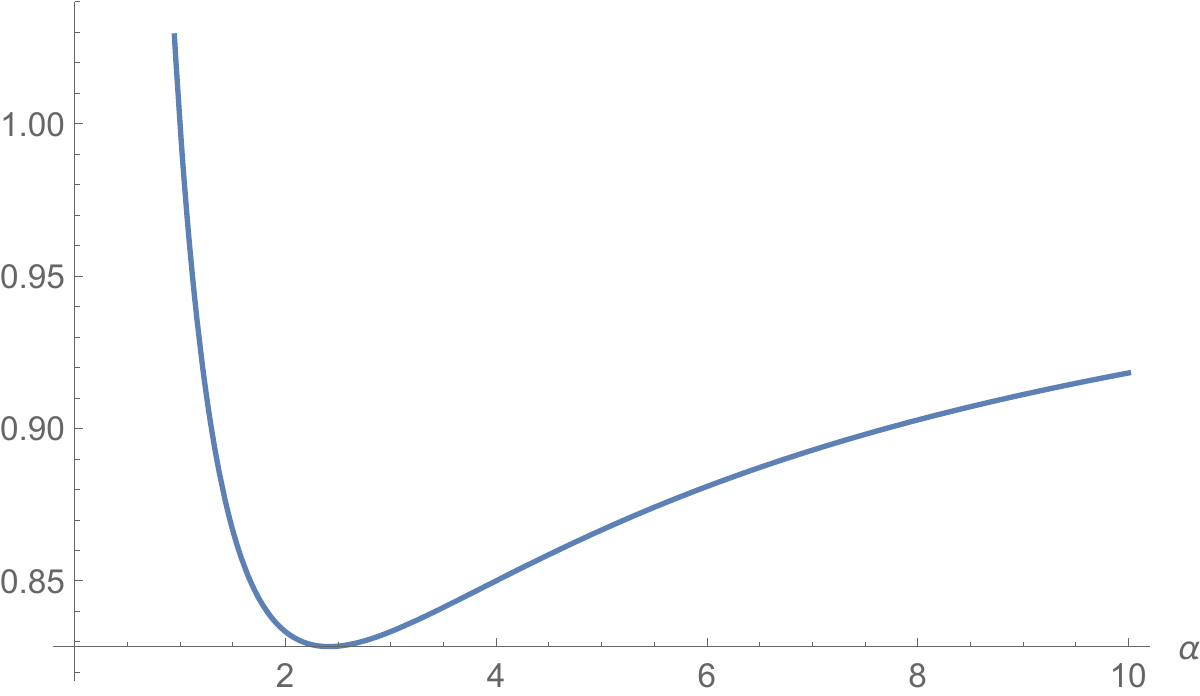}
		\,
		\includegraphics[scale=0.52]{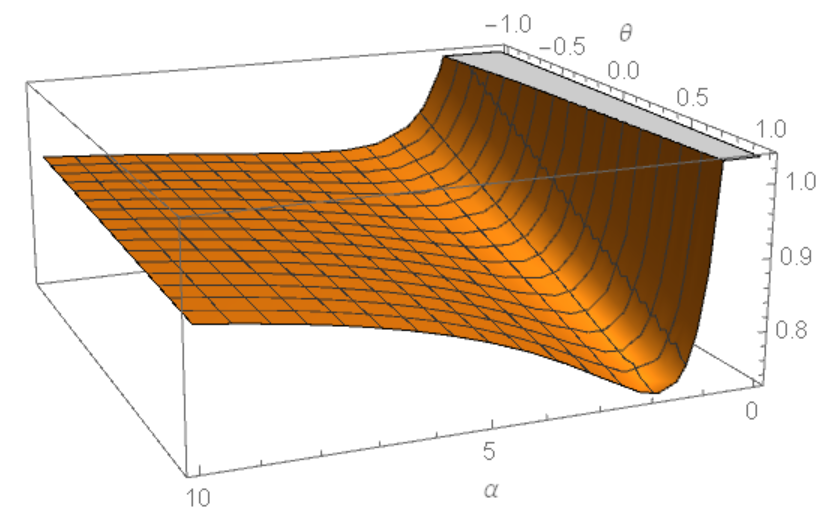}
	\end{center}
	\caption{\small Plots of $\eta_X(\alpha)$ given in Eq.\ (\ref{eq:1 intro}) (left) for $\alpha\in[0,10]$ , and  $\nu_X(\theta,\alpha)$ given in Eq.\ (\ref{eq:2 intro}) (right), for $\theta\in[-1,1],\alpha\in[0,10]$.}
	\label{fig:variability measure, es intro}
\end{figure}
\par
Other examples will be illustrated in the rest of the paper.
\par
	We note that, if there exists a $\theta_I\in\Theta$ such that 
	$$
	\lim_{\theta\to\theta_I}\widehat{C}_{\theta}(u,v)=uv,\qquad\forall u,v\in[0,1],
	$$ 
	then 
	\begin{equation}\label{eq:nu=eta}
		\nu_X(\theta_I,\alpha)=\eta_X(\alpha),\qquad\forall\alpha\in\mathcal{I}.
	\end{equation}
	\begin{remark}\label{rem:variability measure}
	The function defined in Eq.\ (\ref{eq:copula-distorted GMD}) can also be viewed as a variability measure in the sense of Bickel and Lehmann \cite{Bickel:Lehmann}. Indeed, it satisfies the following properties, for all $\theta\in\Theta$ and $\alpha\in\mathcal{I}$:
	\begin{enumerate}
		\item if $X$ and $Y$ have the same distribution, then $\nu_{X}(\theta,\alpha)=\nu_{Y}(\theta,\alpha)$ (law invariance);
		\item $\nu_{X+\delta}(\theta,\alpha)=\nu_{X}(\theta,\alpha)$ for all $X$ and for all constant $\delta\in \mathbb R$ (translation invariance);
		\item $\nu_{\delta X}(\theta,\alpha)=\delta\nu_{X}(\theta,\alpha)$ for all $X$ and for all $\delta\in \mathbb{R}^+$(positive homogeneity);
		\item $	\nu_X(\theta,\alpha)\ge0$ for any random variable $X$, with $\nu_X(\theta,\alpha)=0$ for any degenerate random variable $X$
		(non-negativity);
		\item $X\le_d Y$ implies $	\nu_X(\theta,\alpha)\le	\nu_Y(\theta,\alpha)$ (consistency with dispersive order).
	\end{enumerate}
	\end{remark}
We remark that the copula-distorted Gini's mean difference in Eq.\ (\ref{eq:copula-distorted GMD}) represents an extension of the  measure  studied in Ortega-Jim\'enez et al.\ \cite{patricia:2021}.
Through the paper we implicitly suppose that $\eta_X(\alpha)$ is continuous in $\alpha\in\mathcal{I}$ and $\nu_X(\theta,\alpha)$ is continuous in both $\theta\in\Theta$ and $\alpha\in\mathcal{I}$.
\par 
It is easy to see that, under suitable family of distortion functions, the right-hand-sides of Eqs.\ (\ref{eq:distorted GMD}) and (\ref{eq:copula-distorted GMD}) can be expressed in terms of c.d.f.'s instead of s.f.'s, and copula distributions instead of survival copulas. 
\par 
For different extended versions of Gini's mean difference see for instance
Bassan et al.\ \cite{Bassan:etal}, 
Bernard and M\"uller \cite{bernard:muller}, Capaldo et al.\ \cite{Capaldo:etal} and Vila et al.\ \cite{vila:2023}. 

\subsection{Plan of the paper}
In Section \ref{sec: useful notions} we define some useful notions for our aims  
and provide an example of application arising 
in reliability theory. In Section \ref{sec:independence} we obtain sufficient conditions in order to prove the existence of the minimum or the maximum for the
\emph{distorted Gini's mean difference} $\eta_{X}(\alpha)$.
In Section \ref{sec:main results} we face the same problem for the \emph{copula-distorted Gini's mean difference} $\nu_{X}(\theta,\alpha)$.
Some final remarks are then given in Section \ref{sec: conclusions}. 
	
\section{Basic notions and results}\label{sec: useful notions}
The measures defined in Eqs.\ (\ref{eq:distorted GMD}) and (\ref{eq:copula-distorted GMD}) can be applied also in contexts of reliability theory, where a nonnegative random variable $X$ describes the lifetime of a device or a system. Therefore, a random lifetime will refer to a nonnegative random variable. 
\begin{definition}\label{def:NBU;NWU}
	A random lifetime $X$ with s.f.\ $\overline{F}(x)$ is said to have the New Better than Used (NBU) property if $\overline{F}(x+t)\le\,\overline{F}(x)\overline{F}(t)$ for all $x,t\ge0$. If the inequality is reversed then $X$ is said to have the New Worse than Used (NWU) property.
\end{definition}
\begin{definition}\label{def:IFR;DFR}
	A random lifetime $X$ with s.f.\ $\overline{F}(t)$ and d.f.\ $f(t)$ is said to have the increasing
	failure rate (IFR) property if its hazard rate (h.r.) $\lambda(t)\coloneqq\frac{f(t)}{\overline{F}(t)}$ is increasing in $t$, for all $t$ such that $\overline{F}(t)>0$. If the h.r.\ is decreasing in $t$, then $X$ is said to have the decreasing failure rate (DFR) property.
\end{definition}
For more details about aging notions see for instance Section 4.1 in \cite{navarro:2022}.
\par 
In this paper, for the family of distortion functions $h_\alpha\in\mathcal{H}_\alpha$ involved in $\mathcal{F}_{X,h_\alpha}$ (see Eqs.\ (\ref{eq:family distortions}) and (\ref{eq:family})), we will consider the following assumptions, where ${\rm cl}(\mathcal{I})$ denotes the closure of $\mathcal{I}$:
\begin{description}
	\item[Assumption 2.1] There exists a value $\alpha_I\in{\rm cl}(\mathcal{I})$ such that 
	$$
	\lim_{\alpha\to\alpha_I}h_{\alpha}(u)=u,\qquad\forall u\in[0,1].
	$$
	\item[Assumption 2.2] 
	There exists a value $\alpha_0\in{\rm cl}(\mathcal{I})$ such that 
	$$
	\lim_{\alpha\to\alpha_0}h_{\alpha}(u)=0,\qquad\forall u\in[0,1).
	$$	
\end{description}
\begin{remark}\label{rem:GMD,E}
	It easy to see that
	\begin{description}
		\item[\rm (a)] under the Assumption 2.1, 
		one has $\displaystyle\lim_{\alpha\to\alpha_I}\eta_X(\alpha)={\mathsf{GMD}(X)}$;
		\item[\rm (b)] under the Assumption 2.2, it follows that $\displaystyle	\lim_{\alpha\to\alpha_0}\nu_X(\theta,\alpha)=\mathsf{E}(X)-l$, for all $\theta\in\Theta.$
	\end{description}
\end{remark}
\par
In various contexts of interest the quantile functions are used as an alternative to distribution functions. 
For a random variable $X$ with c.d.f.\ $F(x)$ and d.f.\ $f(x)$, the quantile-density function of $X$ is defined as
$$
q(u)\coloneqq\frac{1}{f\left(F^{-1}(u)\right)}=\frac{{\rm d}}{{\rm d}u}F^{-1}(u),\qquad u\in[0,1]
$$
(see, for instance, Sunoj and Sankaran \cite{Sunoj:2012}). We recall that 
$f\left(F^{-1}(u)\right)$ is called density-quantile function, for $u\in[0,1]$.
Referring to s.f.'s instead of c.d.f.'s, in the following we define a dual version of the quantile-density function. 
\begin{definition}\label{def:dual quantile density}
	Let $X$ be a random variable with s.f.\ $\overline{F}(x)$ and d.f.\ $f(x)$, with support in $(l,r)$. The dual quantile-density function (d.q.d.f.) of $X$ is defined as
	\begin{equation}\label{eq:dual quantile density}
		\tilde{q}(u)=\frac{1}{f\left(\overline{F}^{-1}(u)\right)}=-\frac{\rm{d}}{{\rm d}u}\overline{F}^{-1}(u),\qquad u\in[0,1].
	\end{equation}
\end{definition}
It is easy to see that, from Eq.\ (\ref{eq:dual quantile density}), the d.q.d.f.\ $\tilde{q}(u)$ uniquely identifies the following location-family associated to $\overline{F}$:
$$
\mathcal{\overline{F}}=\left\{\overline{G}: \overline{G}(x)=\overline{F}(x+k),\,\forall x\in\mathbb{R},\, k\in\mathbb{R}\right\}.
$$
By means of the d.q.d.f.\ $\tilde{q}(u)$ we can derive a more tractable expression of the Eqs.\ (\ref{eq:distorted GMD}) and (\ref{eq:copula-distorted GMD}).
\begin{remark}\label{rem:variability measure, dqdf}
	By setting $u=\overline{F}(x)$,  Eqs.\ (\ref{eq:distorted GMD}) and (\ref{eq:copula-distorted GMD}) can be expressed, respectively, as
	 \begin{equation}\label{variability measure, d.q.d.f., independence}
	 	\eta_X(\alpha)=\int_{0}^{1}\tilde{q}(u)\left\{u+h_\alpha(u)\left(1-2u\right)\right\}{\rm d}u,\qquad\alpha\in\mathcal{I},
	 \end{equation}
	 and
	\begin{equation}\label{variability measure, d.q.d.f.}
		\nu_X(\theta,\alpha)=\int_{0}^{1}\tilde{q}(u)\left\{u+h_\alpha(u)-2\,\widehat{C}_\theta\left(u,h_\alpha(u)\right)\right\}{\rm{d}}u,\qquad\theta\in\Theta,\,\alpha\in\mathcal{I}.
	\end{equation}
\end{remark}
Eqs.\ (\ref{variability measure, d.q.d.f., independence}) and (\ref{variability measure, d.q.d.f.})
allow to compare efficiently random variables with different supports, by making use of the following notions: the d.q.d.f.\ $\tilde{q}(u)$, the distortion functions in the family $\mathcal{H}_\alpha$, and also for the Eq.\ (\ref{variability measure, d.q.d.f.}) the associated survival copulas in the family $\mathcal{\hat{C}}^\theta$.
\par
In the main results of the paper, we often make use of the following assumptions regarding $\tilde{q}(u)$.
\begin{description}
	\item[Assumption 2.3] One has 
	$$
	\tilde{q}(u)-\tilde{q}(1-u)\ge0,\qquad\forall u\in\left[0,\frac12\right].
	$$
	\item[Assumption 2.4] One has
	$$
	\tilde{q}(u)-\tilde{q}(1-u)\le0,\qquad\forall u\in\left[0,\frac12\right].
	$$
\end{description}
\begin{proposition}\label{lemma:IFR,DFR and dual quantile density} Let $X$ be a random variable having d.q.d.f. $\tilde{q}(u)$, for $u\in[0,1]$.
	\\
	(i) If $X$ is DFR, then Assumption 2.3 holds. 
	\\
	(ii) If Assumption 2.4 holds, then $X$ is IFR.
\end{proposition}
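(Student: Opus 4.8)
The plan is to reduce both parts to a single algebraic identity expressing $\tilde{q}(u)-\tilde{q}(1-u)$ through the hazard rate evaluated at two mirror quantiles. First I would fix $u\in(0,\tfrac12)$ and set $x_u=\overline{F}^{-1}(u)$ and $x_{1-u}=\overline{F}^{-1}(1-u)$; since $\overline{F}$ is strictly decreasing and $u<1-u$, this gives $\overline{F}(x_u)<\overline{F}(x_{1-u})$ and hence $x_u>x_{1-u}$. From $\overline{F}(x_u)=u$ and $\overline{F}(x_{1-u})=1-u$ together with $f=\lambda\,\overline{F}$ (Definition~\ref{def:IFR;DFR}), one obtains $f(x_u)=u\,\lambda(x_u)$ and $f(x_{1-u})=(1-u)\,\lambda(x_{1-u})$, so that by Definition~\ref{def:dual quantile density}
\[
\tilde{q}(u)-\tilde{q}(1-u)=\frac{1}{u\,\lambda(x_u)}-\frac{1}{(1-u)\,\lambda(x_{1-u})}=\frac{(1-u)\,\lambda(x_{1-u})-u\,\lambda(x_u)}{u(1-u)\,\lambda(x_u)\,\lambda(x_{1-u})}.
\]
Because the denominator is strictly positive on $(0,\tfrac12)$, the sign of $\tilde{q}(u)-\tilde{q}(1-u)$ coincides with that of the numerator $N(u)\coloneqq(1-u)\,\lambda(x_{1-u})-u\,\lambda(x_u)$.

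For part (i) I would exploit monotonicity of $\lambda$. If $X$ is DFR then $\lambda$ is decreasing, so $x_u>x_{1-u}$ forces $\lambda(x_u)\le\lambda(x_{1-u})$; combining this with $u\le 1-u$ gives $(1-u)\,\lambda(x_{1-u})\ge(1-u)\,\lambda(x_u)\ge u\,\lambda(x_u)$, i.e.\ $N(u)\ge0$ for every $u\in[0,\tfrac12]$. Hence $\tilde{q}(u)-\tilde{q}(1-u)\ge0$, which is exactly Assumption~2.3. This direction is immediate once the identity is in place, and the boundary values $u=0$ and $u=\tfrac12$ are handled trivially by continuity.

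For part (ii) I would run the same computation in reverse. Assumption~2.4 asserts $\tilde{q}(u)-\tilde{q}(1-u)\le0$, i.e.\ $N(u)\le0$, so $(1-u)\,\lambda(x_{1-u})\le u\,\lambda(x_u)$ for all $u\in[0,\tfrac12]$. Using $u\le 1-u$ once more yields $u\,\lambda(x_{1-u})\le(1-u)\,\lambda(x_{1-u})\le u\,\lambda(x_u)$, whence $\lambda(x_{1-u})\le\lambda(x_u)$; that is, the hazard rate is never smaller at the higher mirror quantile $x_u$ than at the lower one $x_{1-u}$. The target is to show that, read simultaneously across all $u$, this family of inequalities forces $\lambda$ to be increasing, i.e.\ that $X$ is IFR.

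I expect this last step to be the main obstacle. The identity only constrains $\lambda$ along symmetric quantile pairs $(x_{1-u},x_u)$, and upgrading such pairwise comparisons to global monotonicity of $\lambda$ on the whole support is the delicate point: a purely local reading (letting $u\to\tfrac12^{-}$ to extract a condition on $\lambda$ near the median) does not by itself propagate to all $t$. I would therefore try to strengthen the argument by combining Assumption~2.4 with the standing differentiability hypotheses to exclude any interval on which $\lambda$ decreases, or by transporting the inequality $\lambda(x_{1-u})\le\lambda(x_u)$ monotonically across the two branches separated by the median. It is precisely here that care is needed, since the reflection inequality is in general weaker than full IFR unless additional structure (such as monotonicity or unimodality of the density) is brought into play.
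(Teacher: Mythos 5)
Your part (i) is correct and is precisely the computation that the paper compresses into the phrase ``immediate consequence of Definitions \ref{def:IFR;DFR} and \ref{def:dual quantile density}'': writing $\tilde q(u)=1/\bigl(u\,\lambda(\overline F^{-1}(u))\bigr)$ and using that $\overline F^{-1}$ is decreasing, the DFR hypothesis makes $u\mapsto u\,\lambda(\overline F^{-1}(u))$ increasing, so $\tilde q$ is in fact decreasing on $[0,1]$, which is stronger than Assumption 2.3; your mirror-pair identity is the same argument restricted to the pairs $(u,1-u)$.

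For part (ii), the obstacle you flag at the end is not a technicality you failed to overcome: it is a genuine gap, and it cannot be closed, because Assumption 2.4 does not imply IFR. As you observe, Assumption 2.4 only yields $f(\overline F^{-1}(u))\ge f(\overline F^{-1}(1-u))$ and hence $\lambda(\overline F^{-1}(u))\ge \lambda(\overline F^{-1}(1-u))$ across mirror quantile pairs; this constrains $\lambda$ only across the median and says nothing about its behaviour on either side of it. Concretely, take a continuous $\psi:[0,1]\to[1,2]$ with $\psi\equiv 1$ on $[0,0.3]\cup[0.5,1]$ and $\psi(0.4)=2$, and let $X$ be the random variable determined by $\overline F^{-1}(u)=\int_u^1 \psi(t)^{-1}\,{\rm d}t$, so that $f(\overline F^{-1}(u))=\psi(u)$ and $\tilde q(u)=1/\psi(u)\le 1$. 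Then $\tilde q(u)\le 1=\tilde q(1-u)$ for every $u\in[0,\frac12]$, so Assumption 2.4 holds; yet $\lambda(\overline F^{-1}(u))=\psi(u)/u$ equals $10/3$ at $u=0.3$ and $5$ at $u=0.4$, so $\lambda$ takes the larger value at the smaller point $\overline F^{-1}(0.4)<\overline F^{-1}(0.3)$ and $X$ is not IFR. The statement that \emph{is} immediate from the definitions is that if $\tilde q$ is increasing on all of $[0,1]$ (equivalently, if the density $f$ is increasing on the support), then $X$ is IFR; Assumption 2.4 is strictly weaker than that monotonicity. Since the paper's own proof supplies no argument for this step, your proof of (i) stands as a correct completion of the paper's claim, while (ii) requires either the stronger hypothesis or a corrected statement.
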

\begin{proof}
	The proof is an immediate consequence of Definitions \ref{def:IFR;DFR} and \ref{def:dual quantile density}.
\end{proof}
%
\subsection{Hazard models and applications}\label{subsec: hazard models and applications}
Let us now consider some hazard models. 
We recall that the reversed h.r.\ of a random lifetime $X$ is defined has $\tau(x)=f(x)/F(x)$, for $x\in(l,r)$.
\begin{definition}\label{def:distortion functions}
	Let $X$ be a random lifetime with support in $(l,r)$, s.f.\ $\overline{F}(x)$ and h.r.\ $\lambda(x)$.  
	\begin{description}
		\item[\rm (i)] The proportional hazard model is expressed by a random lifetime $X_\alpha$ having h.r.\
		$$
		\lambda_\alpha(x)=\alpha\lambda(x),\qquad\alpha>0,\,x\in(l,r).
		$$
		\item[\rm (ii)] The proportional reversed hazard model is expressed by a random lifetime $X_\alpha$  
		having reversed h.r.\ 
		$$
		\tau_\alpha(x)=\alpha\tau(x),\qquad\alpha>0,\,x\in(l,r),
		$$
		and having h.r.\
		$$
		\lambda_\alpha(x)=\alpha\lambda(x)g(x),\qquad\alpha>0,\,x\in(l,r),
		$$
		where
		\begin{equation}\label{eq:g(x)}
			g_\alpha(x)=\frac{\left[1-\overline{F}(x)\right]^{\alpha-1}-\left[1-\overline{F}(x)\right]^{\alpha}}{1-\left[1-\overline{F}(x)\right]^{\alpha}},\qquad\alpha>0,\,x\in(l,r).
		\end{equation}
		\item[\rm (iii)]
		The generalized additive hazard model is defined by a random lifetime $X_\alpha$ having h.r.\
		$$
		\lambda_\alpha(x)=\lambda(x)+\alpha k(x),\qquad\alpha>0,\,x\in(l,r),
		$$
		where $k(x)$ is a suitable function such that $\lambda_\alpha(x)\ge0$ for all $x\in(l,r)$. 
		\item[\rm (iv)]
		The power hazard model is defined by a random lifetime $X_\alpha$ having h.r.\ 
		$$
		\lambda_\alpha(x)=\alpha\lambda(x^{\alpha})x^{\alpha-1},\qquad\alpha>0,\,x\in(l,r).
		$$ 
	\end{description}
\end{definition}
%
\begin{table}[h]
	\caption{\small The hazard rates and the distortion functions for
		the hazard models introduced in Definition \ref{def:distortion functions}, with $g_\alpha(x)$ defined in Eq.\ (\ref{eq:g(x)}), 	
		$x\in(l,r),\,\alpha>0$ and $u\in[0,1]$.
		In the case {\rm (iii)} we set $K(t)=\int_{0}^{t}k(x){\rm d}x$, for $t>0$. \label{distortion functions}}
	\scalebox{0.85}{
		\begin{tabular}{l@{\hspace{0,5cm}}l@{\hspace{0,5cm}}l}
			\toprule
			hazard models & hazard rates & distortion functions \\
			\midrule
			{\rm (i)} proportional hazard model & $\lambda_\alpha(x)=\alpha\lambda(x)$ & $h_\alpha(u)=u^\alpha$\\
			{\rm (ii)} proportional reversed hazard model & $\lambda_\alpha(x)=\alpha\lambda(x)g_\alpha(x)$ & $h_\alpha(u)=1-(1-u)^\alpha$\\
			{\rm (iii)} generalized additive hazard model & $\lambda_\alpha(x)=\lambda(x)+\alpha k(x)$  & $h_\alpha(u)=u\exp\left\{-\alpha K\left(\overline{F}^{-1}(u)\right)\right\}$ \\
			{\rm (iv)} power hazard model & $\lambda_\alpha(x)=\alpha\lambda(x^{\alpha})x^{\alpha-1}$   & $h_\alpha(u)=\overline{F}\left(\left(\overline{F}^{-1}(u)\right)^\alpha\right)$\\
			\bottomrule
		\end{tabular}
	}
\end{table}
\par
For the proportional hazard model {\rm (i)} see, for instance, Kumar and Klefsj\"o \cite{Kumar:1994}. 
For the proportional reversed hazard model {\rm (ii)} see Di Crescenzo \cite{dicre:2000}, Gupta and Gupta \cite{Gupta:2007}.
We immediately note that, if $k(x)=1$ for all $x$, the model {\rm (iii)} corresponds to the additive hazard model (see Bebbington et al.\ \cite{Bebbington:etal}).
In Table \ref{distortion functions} we show the distortion functions corresponding to the hazard models considered in Definition \ref{def:distortion functions}. 
For such distortion functions one has that Assumptions 2.1 and 2.2 are both satisfied.
\par 
	In reliability theory some interesting problems involve systems consisting of $n$ components. 
	Let $X_1,X_2,\dots,X_n$ describe the independent and identically distributed random lifetimes of each component, with s.f.\ $\overline{F}$.
	In the case of components connected in series,  the system fails as soon as a component stops working. Therefore its lifetime is
	$X_{(1:n)}=\min\{X_1,X_2,\dots,X_n\}$, having s.f.\  
	\begin{equation}\label{eq:s.f. min}
		\overline{F}_{(1:n)}(x)=\left(\overline{F}(x)\right)^n.
	\end{equation}
	When the components are connected in parallel, the system continues to work until the last component fails. So its lifetime is 
	$X_{(n:n)}=\max\{X_1,X_2,\dots,X_n\}$, having s.f.\
	\begin{equation}\label{eq:s.f. max}
		\overline{F}_{(n:n)}(x)=1-\left(1-\overline{F}(x)\right)^n.
	\end{equation}
	We remark that Eq.\ (\ref{eq:s.f. min}) represents the s.f.\ of $X_\alpha$ when $h_\alpha\in\mathcal{H}_\alpha$ comes 
	from the proportional hazard model {\rm (i)} in Table \ref{distortion functions}, for $\alpha=n$.
	On the other hand, Eq.\ (\ref{eq:s.f. max}) represents the s.f.\ of $X_\alpha$ when $h_\alpha\in\mathcal{H}_\alpha$ comes 
	from the proportional reversed hazard model {\rm (ii)} in Table \ref{distortion functions}, for $\alpha=n$.
	\par
	In Example \ref{ex:1}, under the model  {\rm (i)}, we have shown that the minimum of Eq.\ (\ref{eq:1 intro}) is not reached in $\alpha=1$, for $X$ exponentially distributed with $\mathsf{E}(X)=1$. In these assumptions, it is easy to see that $\eta_X(2)<\eta_X(1)$. Therefore, if $X$ describes the lifetime of a single component, in order to reduce the distance, in the sense of Eq.\ (\ref{eq:distorted GMD}), between $X$ and the lifetime of another independent item, it is better to consider a series systems with two independent copies of $X$ instead of another single component distributed as $X$.

\section{Results under independence}\label{sec:independence}
In this section we provide sufficient conditions for the existence of the minimum (or maximum) for the \emph{distorted Gini's mean difference} defined in Eq.\ (\ref{eq:distorted GMD}).
A sketch about the main results of Section \ref{sec:independence} is provided in Table \ref{tab:main results, independence}.
	\begin{table}[h]
		\caption{\small A sketch about the main results of Section \ref{sec:independence}, with indications of the involved quantities. 	\label{tab:main results, independence}}
		\scalebox{0.80}{
			\begin{tabular}{l@{\hspace{0,5cm}}l@{\hspace{0,5cm}}l}
				\toprule
				Theorems & Relevant topics & Results \\
				\midrule
				Theorem \ref{prop:v0<GMD} & $\tilde{q}(u),h_\alpha(u)$ & $\eta_X(\alpha)\le(\ge)\,\mathsf{GMD}(X)$\\
				Theorem \ref{prop:v0<E}& $\tilde{q}(u), h_\alpha(u)$ & $\eta_X(\alpha)\le(\ge)\,\mathsf{E}(X)-l$	\\
				Theorem \ref{prop:derivata, independence} & $\tilde{q}(u),\frac{\partial}{\partial\alpha}h_\alpha(u)$ & $\displaystyle\lim_{\alpha\to\alpha_I}
				\frac{\rm d}{{\rm d}\alpha}\eta_X(\alpha)\le(\ge)\,0$\\
				Theorem \ref{thm:NBU(NWU)} & $h_\alpha(u)=u\exp\left\{-\alpha K\left(\overline{F}^{-1}(u)\right)\right\}$ with $K(\cdot)$ increasing (decreasing), & a local minimum (maximum) \\	
				& NBU (NWU) property, $\mathsf{G}(X)$ &	for $\eta_X(\alpha)$ 
				\\
				\bottomrule
			\end{tabular}
		}
	\end{table}
	\par
	In this section we assume that	$X$ is a random variable 	with s.f.\ $\overline{F}(x)$, d.f.\ $f(x)$, for $x\in\mathbb{R}$, and d.q.d.f.\ $\tilde{q}(u)$, for $u\in[0,1]$, 
	having support in $(l,r)$ and having finite expected value $\mathsf{E}(X)$.
	Moreover, we assume that $X_\alpha\in\mathcal{F}_{X,h_\alpha}$. 
	\begin{theorem}\label{prop:v0<GMD}
		Let $(X, X_\alpha)$ be a random vector with independent components.
		For all $\alpha\in\mathcal{I}$ such that  
		\begin{equation}\label{cond1:prop1,independence}
			\tilde{q}(1-u)\left[1-u-h_{\alpha}(1-u)\right]\le(\ge)\,\tilde{q}(u)\left[u-h_{\alpha}(u)\right],\qquad\forall u\in\left[0,\frac12\right],
		\end{equation}
		then
		\begin{equation}\label{eq:th1:ind}
			\eta_X(\alpha)\le(\ge)\,\mathsf{GMD}(X).
		\end{equation}
	\end{theorem}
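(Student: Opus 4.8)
The plan is to rewrite both $\eta_X(\alpha)$ and $\mathsf{GMD}(X)$ as integrals over $[0,1]$ against the d.q.d.f.\ $\tilde{q}$, form their difference, and exploit a symmetry about $u=\tfrac12$ so that condition (\ref{cond1:prop1,independence}) controls the sign.

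First I would convert $\mathsf{GMD}(X)$ to quantile form. Starting from Eq.\ (\ref{eq:GMD}) and substituting $u=\overline{F}(x)$ (so that $F(x)=1-u$ and $\mathrm{d}x=-\tilde{q}(u)\,\mathrm{d}u$, with the limits $x=l,r$ becoming $u=1,0$), one obtains $\mathsf{GMD}(X)=2\int_0^1\tilde{q}(u)\,u(1-u)\,\mathrm{d}u$. Combining this with the representation (\ref{variability measure, d.q.d.f., independence}) for $\eta_X(\alpha)$, the difference becomes a single integral whose integrand, after collecting terms, simplifies to $-(1-2u)\,\tilde{q}(u)\,[u-h_\alpha(u)]$; that is,
\[
\eta_X(\alpha)-\mathsf{GMD}(X)=-\int_0^1 (1-2u)\,\tilde{q}(u)\,[u-h_\alpha(u)]\,\mathrm{d}u.
\]

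Next I would fold the integral about $u=\tfrac12$. Splitting $\int_0^1=\int_0^{1/2}+\int_{1/2}^1$ and applying the change of variable $u\mapsto 1-u$ to the second piece turns the right-hand side into
\[
\eta_X(\alpha)-\mathsf{GMD}(X)=-\int_0^{1/2}(1-2u)\,\Big\{\tilde{q}(u)[u-h_\alpha(u)]-\tilde{q}(1-u)[(1-u)-h_\alpha(1-u)]\Big\}\,\mathrm{d}u.
\]
On $[0,\tfrac12]$ one has $1-2u\ge0$, and the bracketed difference is precisely the gap in condition (\ref{cond1:prop1,independence}). Hence, in the ``$\le$'' case the integrand is nonnegative, so the whole integral is nonnegative and $\eta_X(\alpha)-\mathsf{GMD}(X)\le0$, which is (\ref{eq:th1:ind}); the ``$\ge$'' case follows verbatim by reversing every inequality.

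The computations are elementary, so there is no deep obstacle; the only point requiring care is the bookkeeping in the two substitutions --- correctly tracking the factor $\tilde{q}$ and the sign of $\mathrm{d}x=-\tilde{q}(u)\,\mathrm{d}u$ when passing to quantiles, and the sign of $1-2u$ under the reflection $u\mapsto1-u$ --- since a slip there would flip the direction of the final inequality. The key idea that makes the pointwise hypothesis (\ref{cond1:prop1,independence}) suffice is the reflection about $u=\tfrac12$, which pairs the value at $u$ with the value at $1-u$ and lets the common nonnegative weight $(1-2u)$ be factored out of the comparison.
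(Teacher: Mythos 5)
Your proposal is correct and follows essentially the same route as the paper: the paper's proof consists precisely of the identity $\mathsf{GMD}(X)-\eta_X(\alpha)=\int_{0}^{1/2}(1-2u)\{\tilde{q}(u)[u-h_{\alpha}(u)]-\tilde{q}(1-u)[1-u-h_{\alpha}(1-u)]\}\,{\rm d}u$, obtained exactly by the quantile substitution and folding about $u=\tfrac12$ that you carry out, after which the sign follows from (\ref{cond1:prop1,independence}) and $1-2u\ge0$. You have simply made explicit the ``few calculations'' the paper leaves to the reader.
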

	\begin{proof}
	 Making use of Eq.\ (\ref{variability measure, d.q.d.f., independence}), with few calculations one has
		$$
		\mathsf{GMD}(X)-\eta_X(\alpha)=\int_{0}^{\frac12}(1-2u)\left\{\tilde{q}(u)\left[u-h_{\alpha}(u)\right]-\tilde{q}(1-u)\left[1-u-h_{\alpha}(1-u)\right]\right\}{\rm d}u.
		$$		
		Hence, Eq.\ (\ref{eq:th1:ind}) follows from Eq.\ (\ref{cond1:prop1,independence}).
	\end{proof}
	The next result follows from Remark \ref{rem:GMD,E}(a), recalling that $\alpha_I$ is defined as in Assumption 2.1.
	\begin{corollary}
	Under the Assumption 2.1 and the hypothesis of Theorem \ref{prop:v0<GMD}, for all $\alpha\in\mathcal{I}$ such that Eq.\ (\ref{cond1:prop1,independence}) holds, one has 
	\begin{equation}\label{eq:cor1}
	\eta_X(\alpha)\le(\ge)\,\eta_X(\alpha_I).
	\end{equation}
	\end{corollary}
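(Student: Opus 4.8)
The plan is to reduce everything to the short chain $\eta_X(\alpha)\le(\ge)\,\mathsf{GMD}(X)=\eta_X(\alpha_I)$, so that the only new ingredient beyond Theorem \ref{prop:v0<GMD} is the identification of the reference value $\eta_X(\alpha_I)$ with the Gini's mean difference of $X$. In other words, Theorem \ref{prop:v0<GMD} already supplies the comparison with $\mathsf{GMD}(X)$; the corollary is obtained simply by recognizing $\mathsf{GMD}(X)$ as the value of $\eta_X$ at the distinguished parameter $\alpha_I$.

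First I would recall from Remark \ref{rem:GMD,E}(a) that, under Assumption 2.1, one has $\lim_{\alpha\to\alpha_I}\eta_X(\alpha)=\mathsf{GMD}(X)$. Since throughout the paper $\eta_X(\cdot)$ is assumed continuous on $\mathcal{I}$, this limit coincides with the value of the distorted Gini's mean difference evaluated at the (possibly boundary) point $\alpha_I\in\mathrm{cl}(\mathcal{I})$; that is, $\eta_X(\alpha_I)=\mathsf{GMD}(X)$. Then, for any $\alpha\in\mathcal{I}$ for which the sufficient condition (\ref{cond1:prop1,independence}) is in force, Theorem \ref{prop:v0<GMD} yields directly $\eta_X(\alpha)\le(\ge)\,\mathsf{GMD}(X)$, and substituting the previous identity produces (\ref{eq:cor1}) with the inequality in the asserted direction (the $\le$ branch of (\ref{cond1:prop1,independence}) giving $\eta_X(\alpha)\le\eta_X(\alpha_I)$, and symmetrically for $\ge$).

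The only point requiring a little care — and the main, rather mild, obstacle — is the legitimacy of writing $\eta_X(\alpha_I)$ when $\alpha_I$ lies on the boundary of $\mathcal{I}$ rather than in its interior, so that $\eta_X(\alpha_I)$ is a priori only a limiting quantity. This is handled precisely by the standing continuity hypothesis on $\eta_X$, which lets us pass from the limiting statement of Remark \ref{rem:GMD,E}(a) to an honest evaluation at $\alpha_I$. No further computation is needed, and the proof is therefore immediate.
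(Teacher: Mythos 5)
Your argument is exactly the paper's: the corollary is obtained by combining Theorem \ref{prop:v0<GMD} with Remark \ref{rem:GMD,E}(a), interpreting $\eta_X(\alpha_I)$ as the limiting value $\mathsf{GMD}(X)$ via the standing continuity assumption on $\eta_X$. Your additional care about $\alpha_I$ lying in ${\rm cl}(\mathcal{I})$ rather than $\mathcal{I}$ is a correct and welcome clarification, but the route is the same.
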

	\begin{theorem}\label{prop:v0<E}
		Let $(X, X_\alpha)$ be a random vector with independent components.
		For all $\alpha\in\mathcal{I}$ such that 
		\begin{equation}\label{cond1:prop2,independence}
			\tilde{q}(u)h_{\alpha}(u)\le(\ge)\,\tilde{q}(1-u)h_{\alpha}(1-u),\qquad\forall u\in\left[0,\frac12\right],
		\end{equation}
		then 
		\begin{equation}\label{eq:th2:ind}
			\eta_X(\alpha)\le(\ge)\,\mathsf{E}(X)-l.
		\end{equation}
	\end{theorem}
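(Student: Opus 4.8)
The plan is to mirror the proof of Theorem~\ref{prop:v0<GMD}, the only genuinely new ingredient being a quantile representation of the benchmark $\mathsf{E}(X)-l$ in terms of the d.q.d.f.\ $\tilde q$. First I would rewrite $\mathsf{E}(X)-l$ through $\tilde q$. Using the substitution $u=\overline F(x)$ one has $\mathsf{E}(X)=\int_0^1\overline F^{-1}(u)\,{\rm d}u$, and since $\frac{\rm d}{{\rm d}u}\overline F^{-1}(u)=-\tilde q(u)$ with $\overline F^{-1}(1)=l$, integrating gives $\overline F^{-1}(u)-l=\int_u^1\tilde q(s)\,{\rm d}s$. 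An application of Fubini's theorem then yields the key identity
\begin{equation*}
\mathsf{E}(X)-l=\int_0^1 u\,\tilde q(u)\,{\rm d}u.
\end{equation*}
(Equivalently, this is the limiting value $\lim_{\alpha\to\alpha_0}\eta_X(\alpha)$ obtained by letting $h_\alpha\to0$ in Eq.~(\ref{variability measure, d.q.d.f., independence}) under Assumption 2.2, consistently with Remark~\ref{rem:GMD,E}.)

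Next I would subtract, inserting the tractable form (\ref{variability measure, d.q.d.f., independence}) of $\eta_X(\alpha)$. The two copies of $\int_0^1 u\,\tilde q(u)\,{\rm d}u$ cancel, leaving the compact expression
\begin{equation*}
\mathsf{E}(X)-l-\eta_X(\alpha)=-\int_0^1\tilde q(u)\,h_\alpha(u)\,(1-2u)\,{\rm d}u.
\end{equation*}

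The core step is then a symmetrization about $u=\tfrac12$. I would split the integral at $\tfrac12$ and apply the change of variable $u\mapsto 1-u$ on $[\tfrac12,1]$, under which the factor $1-2u$ reverses sign, so that the two halves fold into a single integral over $[0,\tfrac12]$:
\begin{equation*}
\mathsf{E}(X)-l-\eta_X(\alpha)=-\int_0^{1/2}(1-2u)\bigl\{\tilde q(u)h_\alpha(u)-\tilde q(1-u)h_\alpha(1-u)\bigr\}\,{\rm d}u.
\end{equation*}
Since $1-2u\ge0$ on $[0,\tfrac12]$, the hypothesis (\ref{cond1:prop2,independence}) in its ``$\le$'' form makes the braced difference nonpositive, hence the integrand nonpositive and the whole right-hand side nonnegative; this is exactly (\ref{eq:th2:ind}). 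The ``$\ge$'' case is identical with every inequality reversed.

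I do not expect a real obstacle here: once the identity $\mathsf{E}(X)-l=\int_0^1 u\,\tilde q(u)\,{\rm d}u$ is established, the remainder is a one-line cancellation followed by the same folding device used in Theorem~\ref{prop:v0<GMD}. The only points needing a little care are the boundary value $\overline F^{-1}(1)=l$, which relies on $l$ being the left endpoint of the support, and the use of Fubini together with the interchange of limit and integral in $h_\alpha$, both of which are licensed by the standing assumption that $\mathsf{E}(X)$ is finite.
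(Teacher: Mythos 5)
Your proposal is correct and follows essentially the same route as the paper: the paper's proof is exactly the identity $\mathsf{E}(X)-l-\eta_X(\alpha)=\int_{0}^{1/2}(1-2u)\{\tilde{q}(1-u)h_{\alpha}(1-u)-\tilde{q}(u)h_{\alpha}(u)\}\,{\rm d}u$ (stated there as ``few calculations''), which you derive explicitly via $\mathsf{E}(X)-l=\int_0^1 u\,\tilde q(u)\,{\rm d}u$ and the folding $u\mapsto 1-u$. The only difference is that you spell out the intermediate steps the paper omits.
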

	\begin{proof}
		Making use of Eq.\ (\ref{variability measure, d.q.d.f., independence}), with few calculations one has 
		$$
		\mathsf{E}(X)-l-\eta_X(\alpha)=\int_{0}^{\frac12}(1-2u)\left\{\tilde{q}(1-u)h_{\alpha}(1-u)-\tilde{q}(u)h_{\alpha}(u)\right\}{\rm d}u.
		$$
		Therefore, the thesis follows from hypothesis (\ref{cond1:prop2,independence}).	
	\end{proof}
	The next result follows from Remark \ref{rem:GMD,E}{\rm (b)}, recalling that $\alpha_0$ is defined as in Assumption 2.2.
	\begin{corollary}
		Under the Assumption 2.2 and the hypothesis of Theorem \ref{prop:v0<E}, for all $\alpha\in\mathcal{I}$ such that Eq.\ (\ref{cond1:prop2,independence}) holds, one has 
		\begin{equation}\label{eq:cor2}
		\eta_X(\alpha)\le(\ge)\,\eta_X(\alpha_0).
		\end{equation}
	\end{corollary}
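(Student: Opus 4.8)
The plan is to obtain (\ref{eq:cor2}) by combining the one-sided bound furnished by Theorem~\ref{prop:v0<E} with an explicit evaluation of $\eta_X$ at the boundary value $\alpha_0$; the argument parallels the one used for the Corollary following Theorem~\ref{prop:v0<GMD}, with $\mathsf{GMD}(X)$ replaced by $\mathsf{E}(X)-l$ and Remark~\ref{rem:GMD,E}(a) replaced by Remark~\ref{rem:GMD,E}(b).

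First I would apply Theorem~\ref{prop:v0<E}. Since $\alpha\in\mathcal{I}$ satisfies condition~(\ref{cond1:prop2,independence}), that theorem yields at once the inequality $\eta_X(\alpha)\le(\ge)\,\mathsf{E}(X)-l$. Consequently the corollary is proved as soon as we show that the constant on the right-hand side equals $\eta_X(\alpha_0)$.

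The second step is therefore to identify $\eta_X(\alpha_0)$ with $\mathsf{E}(X)-l$. Because $\alpha_0\in{\rm cl}(\mathcal{I})$ and $\eta_X(\cdot)$ is assumed continuous on $\mathcal{I}$, the value $\eta_X(\alpha_0)$ is read as the limit $\lim_{\alpha\to\alpha_0}\eta_X(\alpha)$. Under Assumption 2.2 one has $h_\alpha(u)\to0$ for every $u\in[0,1)$, hence $h_\alpha(\overline{F}(x))\to0$ for every $x>l$; letting $\alpha\to\alpha_0$ in the representation~(\ref{eq:distorted GMD}) collapses the integrand to $\overline{F}(x)$ and gives $\lim_{\alpha\to\alpha_0}\eta_X(\alpha)=\int_l^r\overline{F}(x)\,{\rm d}x=\mathsf{E}(X)-l$. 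This is precisely Remark~\ref{rem:GMD,E}(b) specialized to the independence case through the identity~(\ref{eq:nu=eta}), so it may simply be invoked rather than re-derived.

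Chaining the two facts then gives $\eta_X(\alpha)\le(\ge)\,\mathsf{E}(X)-l=\eta_X(\alpha_0)$, which is (\ref{eq:cor2}). The argument is almost entirely bookkeeping; the only point deserving care is the passage to the limit inside the integral in the second step, i.e.\ the justification of $\lim_{\alpha\to\alpha_0}\eta_X(\alpha)=\mathsf{E}(X)-l$. Since a crude bound on the integrand by $\overline{F}(x)+1$ need not be integrable when $r=+\infty$, a direct proof would require a slightly more careful dominated-convergence estimate; this difficulty, however, is absorbed into Remark~\ref{rem:GMD,E}(b), which is available by hypothesis.
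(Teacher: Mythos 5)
Your argument is correct and matches the paper's: the paper proves this corollary simply by combining the bound $\eta_X(\alpha)\le(\ge)\,\mathsf{E}(X)-l$ from Theorem~\ref{prop:v0<E} with Remark~\ref{rem:GMD,E}(b), which identifies $\eta_X(\alpha_0)$ (read as the limit under Assumption~2.2) with $\mathsf{E}(X)-l$. Your additional remark about justifying the interchange of limit and integral when $r=+\infty$ is a reasonable point of care, but it concerns the remark being invoked rather than the corollary itself.
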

	When both Eqs.\ (\ref{eq:cor1}) and (\ref{eq:cor2}) hold for
	$\alpha\in\left(\min\{\alpha_I,\alpha_0\},\max\{\alpha_I,\alpha_0\}\right)\subseteq{\rm cl}(\mathcal{I})$, then there exists at least an $\alpha'\in\left(\min\{\alpha_I,\alpha_0\},\max\{\alpha_I,\alpha_0\}\right)$ such that $\eta_X(\alpha)$ attains a local minimum (maximum) in $\alpha=\alpha'$.
	\begin{remark}
		Let $X$ be uniformly distributed over $[l,r]$, with $l,r\in\mathbb{R}$. 
		In this case Eq.\ (\ref{cond1:prop2,independence}) is valid for all $h_\alpha\in\mathcal{H}_\alpha$, so that from Theorem \ref{prop:v0<E} one has
		$$
		\eta_X(\alpha)\le\frac{r-l}{2},\qquad\forall \alpha\in\mathcal{I}.
		$$
		In addition, we remark that under the models {\bf (i)}, {\bf (ii)} and {\bf (iv)} of Table \ref{distortion functions}, one has
		$$
		\eta_X(\alpha)=\frac{1}{2}+\frac{1}{1+\alpha}-\frac{2}{2+\alpha},\qquad\alpha>0.
		$$
	\end{remark}
	\begin{corollary}\label{cor:GMD,E}
	Let $(X, X_\alpha)$ be a random vector with independent components. If Assumption 2.4 (Assumption 2.3) hold, then
		\begin{equation}\label{eq:GMD,E(X)}
			\mathsf{GMD}(X)\le(\ge)\,\mathsf{E}(X)-l.
		\end{equation}
	\end{corollary}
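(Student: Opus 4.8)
The plan is to recognise the statement as the identity-distortion special case of Theorem \ref{prop:v0<E}. First I would pass to quantile form: setting $h_\alpha(u)=u$ in Eq.\ (\ref{variability measure, d.q.d.f., independence}) (equivalently, letting $\alpha\to\alpha_I$ and using Remark \ref{rem:GMD,E}(a)) and applying the change of variable $u=\overline{F}(x)$ to $\mathsf{E}(X)-l=\int_l^r\overline{F}(x)\,{\rm d}x$, one gets the two representations
$$
\mathsf{GMD}(X)=2\int_0^1 u(1-u)\,\tilde{q}(u)\,{\rm d}u,\qquad \mathsf{E}(X)-l=\int_0^1 u\,\tilde{q}(u)\,{\rm d}u .
$$
The useful observation here is that $\mathsf{GMD}(X)$ is nothing but the value of $\eta_X$ at the non-distorted (identity) map, so the whole comparison is the case $h_\alpha(u)=u$ of Theorem \ref{prop:v0<E}.

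Next I would subtract the two integrals and symmetrise the integrand about $u=\tfrac12$, exactly as in the proof of Theorem \ref{prop:v0<E}: splitting the integral over $[0,1]$ at $u=\tfrac12$ and reflecting the upper half through $u\mapsto 1-u$, the difference collapses to
$$
\mathsf{E}(X)-l-\mathsf{GMD}(X)=\int_0^{1/2}(1-2u)\left[(1-u)\,\tilde{q}(1-u)-u\,\tilde{q}(u)\right]{\rm d}u .
$$
This is precisely the identity from the proof of Theorem \ref{prop:v0<E} specialised to $h_\alpha(u)=u$, so that condition (\ref{cond1:prop2,independence}) reduces to controlling the sign of the bracket $(1-u)\tilde{q}(1-u)-u\,\tilde{q}(u)$ on $[0,\tfrac12]$.

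The concluding step reads off this sign from the hypothesis, using that $(1-2u)\ge 0$ and $0\le u\le 1-u$ on $[0,\tfrac12]$ and that $\tilde{q}\ge 0$. Under Assumption 2.4 one has $\tilde{q}(u)\le\tilde{q}(1-u)$, so the two orderings $u\le 1-u$ and $\tilde{q}(u)\le\tilde{q}(1-u)$ reinforce each other and $u\,\tilde{q}(u)\le(1-u)\,\tilde{q}(1-u)$; the integrand is then nonnegative and $\mathsf{GMD}(X)\le\mathsf{E}(X)-l$ follows, which is the $(\le)$ claim.

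The main obstacle is the $(\ge)$ direction under Assumption 2.3. There the hypothesis $\tilde{q}(u)\ge\tilde{q}(1-u)$ is pitted against the opposite ordering $u\le 1-u$, so monotonicity of $\tilde{q}$ no longer pins down the sign of the bracket by itself. I would isolate this as the delicate point: the task is to decide whether Assumption 2.3 forces the reversed product inequality $u\,\tilde{q}(u)\ge(1-u)\,\tilde{q}(1-u)$ on $[0,\tfrac12]$, i.e.\ the $(\ge)$ form of (\ref{cond1:prop2,independence}) with $h_\alpha(u)=u$, which is exactly the hypothesis needed to invoke the $(\ge)$ branch of Theorem \ref{prop:v0<E}. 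This sign analysis is the only non-routine ingredient of the whole argument.
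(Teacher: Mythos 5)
Your route is the same as the paper's: specialize Theorem \ref{prop:v0<E} to the identity distortion $h_{\alpha_I}(u)=u$, and your quantile representations and the symmetrized identity $\mathsf{E}(X)-l-\mathsf{GMD}(X)=\int_0^{1/2}(1-2u)\left[(1-u)\tilde{q}(1-u)-u\,\tilde{q}(u)\right]{\rm d}u$ are correct. The $(\le)$ half under Assumption 2.4 is complete: $u\le 1-u$ and $\tilde{q}(u)\le\tilde{q}(1-u)$ do reinforce each other, giving the pointwise form of (\ref{cond1:prop2,independence}). But the $(\ge)$ half is not proved --- you explicitly leave open whether Assumption 2.3 forces $u\,\tilde{q}(u)\ge(1-u)\,\tilde{q}(1-u)$ on $[0,\tfrac12]$ --- so as it stands the proposal establishes only half the statement. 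That is a genuine gap, and you were right to flag it as the non-routine point.

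Moreover, the step you hesitated over actually fails, so the gap cannot be closed by this (or, it seems, any) argument. Take $X$ uniform on $[l,r]$: then $\tilde{q}\equiv r-l$, Assumption 2.3 holds (with equality), yet $\mathsf{GMD}(X)=(r-l)/3<(r-l)/2=\mathsf{E}(X)-l$, contradicting the $(\ge)$ conclusion. Even with strict inequality in Assumption 2.3 the claim fails: for $\tilde{q}(u)=1+\epsilon(1-2u)$ with $0<\epsilon<1$ one has $\tilde{q}(u)-\tilde{q}(1-u)=2\epsilon(1-2u)>0$ on $[0,\tfrac12)$, while the symmetrized integrand equals $(1-2u)^2(1-\epsilon)>0$, so again $\mathsf{GMD}(X)<\mathsf{E}(X)-l$. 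The paper's own proof simply asserts that Assumption 2.3 is sufficient for the $(\ge)$ form of (\ref{cond1:prop2,independence}) at $\alpha=\alpha_I$, which is exactly the unjustified implication you isolated; it is valid only in the 2.4/$(\le)$ pairing. So your proposal is incomplete, but the incompleteness points at a real defect in the stated corollary rather than at a missing idea on your part; a correct $(\ge)$ statement would need a stronger hypothesis (e.g.\ $u\,\tilde{q}(u)\ge(1-u)\,\tilde{q}(1-u)$ itself, or a DFR-type condition strong enough to imply it).
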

	\begin{proof}
		The thesis follows from Eq.\  (\ref{eq:th2:ind}) of Theorem \ref{prop:v0<E}, by noting that Assumption 2.4 (Assumption 2.3) represent sufficient conditions for the Eq.\ (\ref{cond1:prop2,independence}) for $\alpha=\alpha_I$. 
	\end{proof}
    In the next theorem we aim to obtain sufficient conditions in order to specify the sign of ${\displaystyle\lim_{\alpha\to\alpha_I}}\frac{\rm d}{{\rm d}\alpha}\eta_X(\alpha)$, where due to Eq.\ (\ref{variability measure, d.q.d.f., independence}) and by differentiation under the integral sign, one has
    \begin{equation}\label{eq:derivative variability measure, independence}
    	\frac{\rm d}{{\rm d}\alpha}\eta_X(\alpha)=\int_{0}^{1}\tilde{q}(u)\frac{\partial}{\partial\alpha}h_\alpha(u)\left(1-2u\right){\rm d}u,\qquad\alpha\in\mathcal{I}.
    \end{equation}
    \begin{theorem}\label{prop:derivata, independence}
    	Let $(X, X_\alpha)$ be a random vector with independent components.
    	If
    	\begin{equation}\label{cond1:thm3}
    		\lim_{\alpha\to\alpha_I}\tilde{q}(u)\frac{\partial}{\partial\alpha}h_{\alpha}(u)\le(\ge)\,\lim_{\alpha\to\alpha_I}\tilde{q}(1-u)\frac{\partial}{\partial\alpha}h_{\alpha}(1-u),\qquad\forall u\in\left[0,\frac12\right],
    	\end{equation}
    	then
    	\begin{equation}\label{eq:thm3}
    		\lim_{\alpha\to\alpha_I}
    		\frac{\rm d}{{\rm d}\alpha}\eta_X(\alpha)\le(\ge)\,0.
    	\end{equation} 
    \end{theorem}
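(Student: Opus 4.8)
The plan is to mirror the folding argument used in the proofs of Theorems \ref{prop:v0<GMD} and \ref{prop:v0<E}, now applied to the derivative displayed in Eq.\ (\ref{eq:derivative variability measure, independence}). First I would split the integral in (\ref{eq:derivative variability measure, independence}) at $u=\tfrac12$ and, on the upper half $[\tfrac12,1]$, perform the change of variable $u\mapsto 1-u$. Since the factor $1-2u$ reverses sign under this reflection while $\tilde q$ and $\frac{\partial}{\partial\alpha}h_\alpha$ are evaluated at the reflected point, the two halves combine into a single integral over $[0,\tfrac12]$, yielding
\begin{equation*}
\frac{\rm d}{{\rm d}\alpha}\eta_X(\alpha)=\int_{0}^{\frac12}(1-2u)\left\{\tilde{q}(u)\frac{\partial}{\partial\alpha}h_{\alpha}(u)-\tilde{q}(1-u)\frac{\partial}{\partial\alpha}h_{\alpha}(1-u)\right\}{\rm d}u,\qquad\alpha\in\mathcal{I}.
\end{equation*}
This is the exact analogue of the identities appearing in the two earlier proofs, and it isolates the difference whose sign is governed by hypothesis (\ref{cond1:thm3}).

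Next I would take the limit $\alpha\to\alpha_I$ on both sides. On $[0,\tfrac12]$ the weight $1-2u$ is nonnegative, and the bracketed term converges pointwise in $u$ to
\begin{equation*}
\lim_{\alpha\to\alpha_I}\tilde{q}(u)\frac{\partial}{\partial\alpha}h_{\alpha}(u)-\lim_{\alpha\to\alpha_I}\tilde{q}(1-u)\frac{\partial}{\partial\alpha}h_{\alpha}(1-u),
\end{equation*}
which, by (\ref{cond1:thm3}), is $\le 0$ (respectively $\ge 0$) for every $u\in[0,\tfrac12]$. Provided the limit may be carried inside the integral, the integrand is then $\le 0$ ($\ge 0$) for a.e.\ $u\in[0,\tfrac12]$, and integration preserves this sign, giving exactly (\ref{eq:thm3}).

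The hard part will be justifying that the limit $\alpha\to\alpha_I$ commutes with the integral over $[0,\tfrac12]$. I would secure this via dominated convergence: it suffices to produce an integrable dominating function for $(1-2u)\,\tilde q(u)\,\frac{\partial}{\partial\alpha}h_\alpha(u)$ that is uniform for $\alpha$ in a one-sided neighbourhood of $\alpha_I$. This is precisely the regularity already invoked, tacitly, when differentiating under the integral sign to pass from (\ref{variability measure, d.q.d.f., independence}) to (\ref{eq:derivative variability measure, independence}); together with the finiteness of $\mathsf{E}(X)$, which controls the integrability of $u\,\tilde q(u)$ near the endpoints, and the standing continuity assumptions on the family $\mathcal{H}_\alpha$, it furnishes the required envelope. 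Once the interchange is legitimate, the pointwise sign supplied by (\ref{cond1:thm3}) propagates to the integral and the conclusion (\ref{eq:thm3}) follows.
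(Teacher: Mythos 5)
Your proof is correct and follows essentially the same route as the paper: fold the integral in Eq.\ (\ref{eq:derivative variability measure, independence}) onto $[0,\tfrac12]$ via the reflection $u\mapsto 1-u$, note that the weight $1-2u$ is nonnegative there, and pass the limit $\alpha\to\alpha_I$ inside the integral so that hypothesis (\ref{cond1:thm3}) fixes the sign. Your explicit dominated-convergence justification for interchanging limit and integral is a point the paper leaves implicit, but it is the same argument.
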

    \begin{proof}
    	Making use of Eq.\ (\ref{variability measure, d.q.d.f., independence}), from Eq.\ (\ref{eq:derivative variability measure, independence}), with some calculations one has 
    	$$
    	\lim_{\alpha\to\alpha_I}
    	\frac{\rm d}{{\rm d}\alpha}\eta_X(\alpha)=\int_{0}^{\frac12}\left(1-2u\right)\left\{\lim_{\alpha\to\alpha_I}\left[\tilde{q}(u)\frac{\partial}{\partial\alpha}h_{\alpha}(u)-\tilde{q}(1-u)\frac{\partial}{\partial\alpha}h_{\alpha}(1-u)\right]\right\}{\rm d}u.
    	$$
    	Therefore, Eq.\ (\ref{eq:thm3}) follows from Eq.\ (\ref{cond1:thm3})
    	\end{proof}
	\par
	Recalling Assumptions 2.1 and 2.2, if Eqs.\ (\ref{eq:GMD,E(X)}) and (\ref{eq:thm3}) hold, then there exists a local minimum (maximum) of $\eta_X(\alpha)$, due to the continuity of $\eta_X$.
	\begin{example}
		Let $X$ be uniformly distributed on $[0,1]$. Let $X_\alpha\in\mathcal{F}_{X,h_\alpha}$, with distorted s.f.\ obtained through the generalized additive hazard model {\rm (iii)} in Table \ref{distortion functions}, for $K(t)=\frac{t^2}{2}$. Since $\overline{F}^{-1}(u)=1-u$, with $u\in[0,1]$, one has 
		$$
		h_\alpha(u)=u\exp\left\{{-\alpha\frac{(1-u)^2}{2}}\right\},\qquad u\in[0,1].
		$$
		Therefore, Eq.\ (\ref{variability measure, d.q.d.f., independence}) becomes
		\begin{equation}\label{variability measure, independence, es 1}
			\eta_X(\alpha)=\frac{6-2e^{-\alpha/2}+\alpha}{2\alpha}-\frac{\sqrt{\frac{\pi}{2}}(2+\alpha)\mathsf{Erf}(\frac{\sqrt{\alpha}}{\sqrt{2}})}{\alpha^\frac32},\qquad\alpha>0,
		\end{equation}
		where $\mathsf{Erf}(x)=\frac{2}{\sqrt{\pi}}\int_{0}^{x}\exp\left\{-t^2\right\}\text{d}t$, for $x>0$, is the well known error function.
		Since $h_\alpha$ satisfies Eqs.\ (\ref{cond1:prop1,independence}) and (\ref{cond1:prop2,independence}), there exists the minimum for the $\eta_X(\alpha)$ given in Eq.\ (\ref{variability measure, independence, es 1}). This is confirmed by the plot in the left-hand-side of Figure \ref{fig:variability measure, es 4.1,4.2}.
	\end{example}
	\begin{figure}[t] 
		\begin{center}
			\includegraphics[scale=0.38]{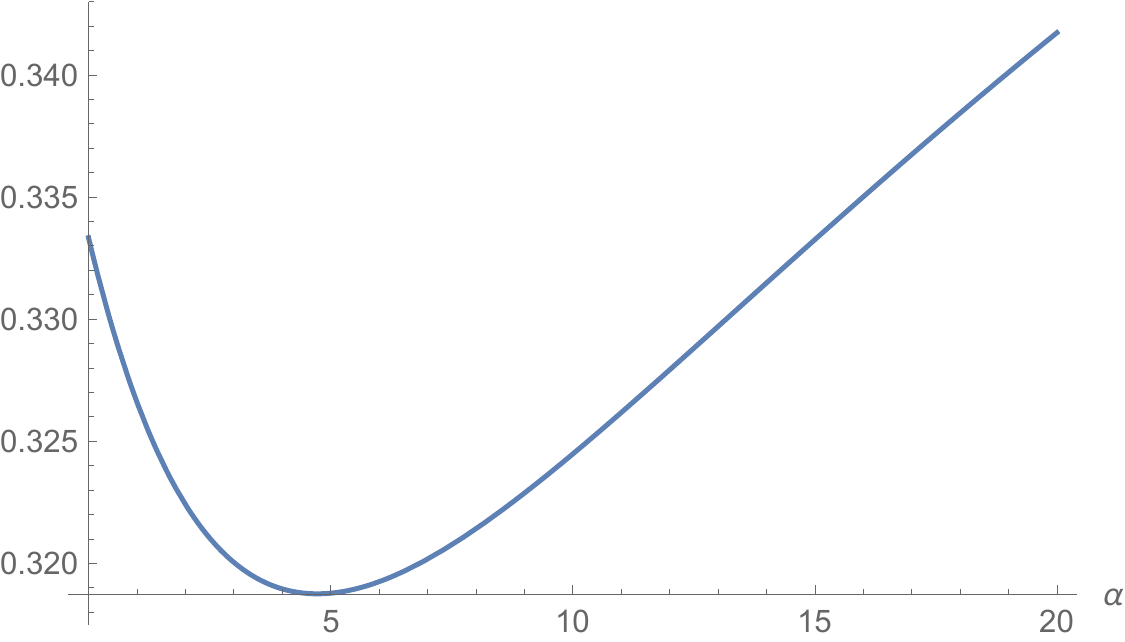}
			\,
			\includegraphics[scale=0.38]{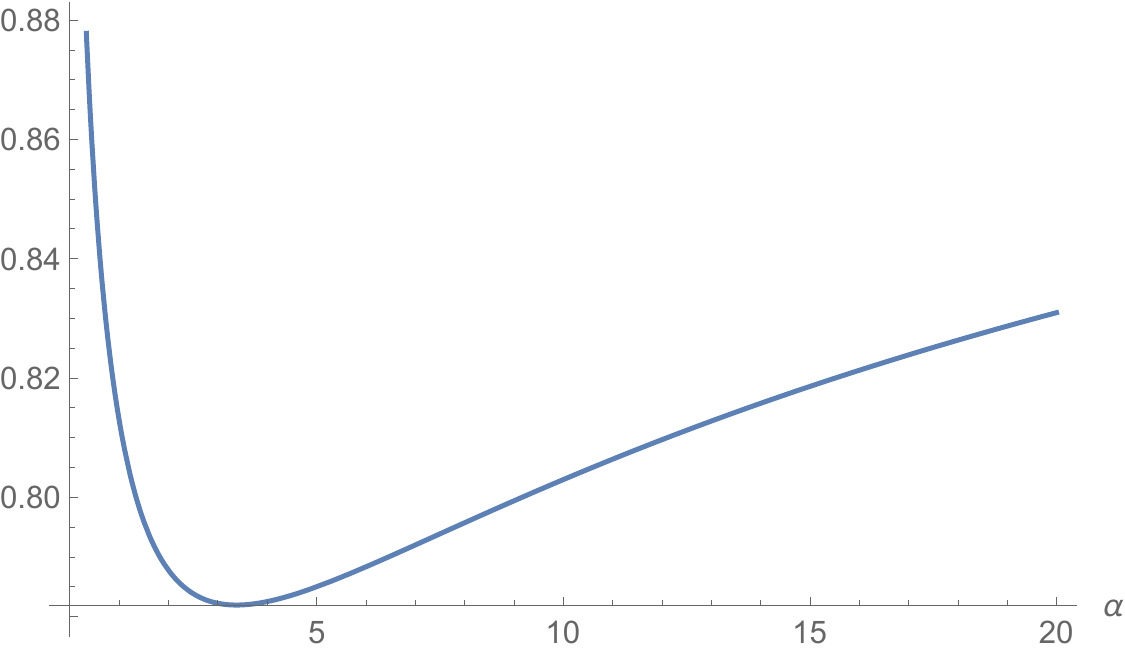}
		\end{center}
		\caption{\small Plots of $\eta_X(\alpha)$ given in Eq.\ (\ref{variability measure, independence, es 1}) (left) and given in Eq.\ (\ref{variability measure, independence, es 2}) (right), for $\alpha\in[0,20]$.}
		\label{fig:variability measure, es 4.1,4.2}
	\end{figure}
	\begin{example}
		Let $X$ be an exponential random variable having $\mathsf{E}(X)=1$. Let $X_\alpha\in\mathcal{F}_{X,h_\alpha}$, with distorted s.f.\ obtained through the generalized additive hazard model {\rm (iii)} in Table \ref{distortion functions}, for $K(t)=\frac{t^2}{2}$. Since $\overline{F}^{-1}(u)=-\log(u)$, with $u\in[0,1]$, one has 
		$$
		h_\alpha(u)=u\exp\left\{{-\alpha\frac{(-\log(u))^2}{2}}\right\},\qquad u\in[0,1].
		$$
		Therefore, Eq.\ (\ref{variability measure, d.q.d.f., independence}) becomes
		\begin{equation}\label{variability measure, independence, es 2}
			\eta_X(\alpha)=1+\frac{e^{\frac{1}{2\alpha}}\sqrt{\frac{\pi}{2}}\mathsf{Erfc}(\frac{1}{\sqrt{2\alpha}})}{\sqrt{\alpha}}-\frac{e^{\frac2\alpha}\sqrt{2\pi}\mathsf{Erfc}(\frac{\sqrt{2}}{\sqrt{\alpha}})}{\sqrt{\alpha}},\qquad\alpha\ge0.
		\end{equation}
		where $\mathsf{Erfc}(x)=1-\mathsf{Erf}(x)$ is the complementary error function.
		Since $h_\alpha$ satisfies Eqs.\ (\ref{cond1:prop1,independence}) and (\ref{cond1:prop2,independence}), there exists the minimum for the $\eta_X(\alpha)$ given in Eq.\ (\ref{variability measure, independence, es 2}). This is confirmed by the plot in the right-hand-side of Figure \ref{fig:variability measure, es 4.1,4.2}.
	\end{example}
	\par
	Let us now consider the family of distortion functions $\mathcal{H}_\alpha$ under the generalized additive hazard model {\rm (iii)} in Table \ref{distortion functions}. In this hypothesis, Eq.\ (\ref{eq:distorted GMD}) becomes
	\begin{equation}\label{eq:variability measure 3}
		\eta_X(\alpha)=\int_{l}^{r}\overline{F}(x)\left\{1+e^{\left\{-\alpha K\left(x\right)\right\}}\left[1-2\overline{F}(x)\right]\right\}{\rm d}x,
	\end{equation}
	and also  
	\begin{equation}\label{eq:der,additive}
		\frac{\rm d}{{\rm d}\alpha}\eta_X(\alpha)=\int_{l}^{r}K(x)e^{\left\{-\alpha K(x)\right\}}\overline{F}(x)\left[2\overline{F}(x)-1\right]{\rm d}x.
	\end{equation}
	\begin{theorem}\label{thm:NBU(NWU)}
		Let $(X, X_\alpha)$ be a random vector with independent components having support $(0,+\infty)$. Let $\eta_X(\alpha)$ be as in Eq.\ (\ref{eq:variability measure 3}) and let $\mathsf{G}(X)$ be the Gini's index of $X$.
		Suppose that $K(x)$ is increasing (decreasing) in $x>0$. If $X$ is NWU (NBU) and $\mathsf{G}(X)\le(\ge)\,\frac12$, then there exists the minimum (maximum) for
		$\eta_X(\alpha)$.
	\end{theorem}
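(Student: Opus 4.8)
The plan is to treat $\eta_X$ as a continuous function of $\alpha$ on $\mathcal I$ and to locate an interior extremum from its two boundary limits together with the sign of its derivative at the endpoint $\alpha_I$. First I would record that, for the generalized additive model {\rm (iii)}, both Assumptions 2.1 and 2.2 hold with $\alpha_I=0$ and $\alpha_0=+\infty$, so Remark \ref{rem:GMD,E} gives $\lim_{\alpha\to0}\eta_X(\alpha)=\mathsf{GMD}(X)$ and $\lim_{\alpha\to\infty}\eta_X(\alpha)=\mathsf E(X)-l=\mathsf E(X)$ (here $l=0$). Since $\mathsf{GMD}(X)=2\,\mathsf G(X)\,\mathsf E(X)$ by Eq.\ (\ref{eq:rmd,gini index}), the hypothesis $\mathsf G(X)\le(\ge)\tfrac12$ is precisely the ordering $\mathsf{GMD}(X)\le(\ge)\mathsf E(X)$ of the two boundary values, which fixes the relative height of the endpoints.

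The core of the argument is the sign of $D_0:=\lim_{\alpha\to0}\frac{\rm d}{{\rm d}\alpha}\eta_X(\alpha)$. Letting $\alpha\to0$ in Eq.\ (\ref{eq:der,additive}) and writing $K(x)=\int_0^x k(s)\,{\rm d}s$, an application of Fubini yields
\[
D_0=\int_0^\infty k(s)\,\Psi(s)\,{\rm d}s,\qquad \Psi(s):=\int_s^\infty\overline F(x)\bigl[2\overline F(x)-1\bigr]\,{\rm d}x .
\]
The key claim is that $\Psi(s)\le0$ for every $s\ge0$. In the minimum case I would derive this from the NWU inequality $\overline F(x)^2\le\overline F(2x)$: after the change of variable $y=2x$ it gives $2\int_s^\infty\overline F(x)^2\,{\rm d}x\le\int_{2s}^\infty\overline F(y)\,{\rm d}y$, so that $\Psi(s)\le-\int_s^{2s}\overline F(y)\,{\rm d}y\le0$. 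Since $K$ is (strictly) increasing, $k\ge0$, whence $D_0=\int_0^\infty k\,\Psi\le0$, and in fact $D_0<0$ because $\Psi$ is strictly negative on the interior of the support while $k$ is positive on a set of positive measure.

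For the maximum case one argues dually: the bound $\mathsf G(X)\ge\tfrac12$ makes $\Psi(0)=\mathsf E(X)-\mathsf{GMD}(X)\le0$, and since $\Psi'(s)=-\overline F(s)[2\overline F(s)-1]$ shows $\Psi$ is $U$-shaped with a single minimum at the median and $\Psi(+\infty)=0$, one again gets $\Psi(s)\le0$ for all $s$; now $K$ decreasing forces $k\le0$, so $D_0=\int_0^\infty k\,\Psi\ge0$, strictly. I expect the sign of $D_0$ to be the main obstacle, since the integrand $\overline F(x)[2\overline F(x)-1]$ changes sign at the median and any naive pointwise bound is inconclusive; it is exactly the aging property (in the minimum case) or the Gini‑index bound (in the maximum case) that pins the sign of $\Psi$, and this is the step where the hypotheses are genuinely used. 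To conclude, in the minimum case $D_0<0$ gives $\eta_X(\alpha)<\mathsf{GMD}(X)\le\mathsf E(X)$ for small $\alpha>0$, so $\inf_{\alpha}\eta_X(\alpha)$ lies strictly below both boundary limits; a minimizing sequence therefore cannot drift to $0$ or to $\infty$ and, by continuity, the infimum is attained at an interior point, i.e.\ $\eta_X$ has a minimum. The supremum is handled symmetrically, the only point requiring extra care being that for decreasing $K$ the admissible parameter range $\mathcal I$ need not extend to $+\infty$, so one must verify the corresponding boundary behaviour of $\eta_X$ there before invoking the same compactness argument.
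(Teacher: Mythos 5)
Your proposal is correct and shares the overall architecture of the paper's proof --- identify the two boundary limits $\eta_X(0^+)=\mathsf{GMD}(X)$ and $\eta_X(+\infty)=\mathsf{E}(X)$, order them via $\mathsf{G}(X)\le(\ge)\frac12$, determine the sign of $\lim_{\alpha\to\alpha_I}\frac{\rm d}{{\rm d}\alpha}\eta_X(\alpha)$, and conclude by continuity --- but it executes the central derivative-sign step by a genuinely different route. The paper shows from NWU (NBU) only that $\int_0^{\infty}\overline{F}(x)[2\overline{F}(x)-1]\,{\rm d}x\le(\ge)0$ and then asserts the corresponding sign for the $K$-weighted integral in Eq.\ (\ref{eq:der,additive}); the passage from the unweighted to the weighted integral is left implicit, and it needs a single-crossing (Chebyshev-type) argument exploiting the fact that $\overline{F}(x)[2\overline{F}(x)-1]$ changes sign once at the median $m$. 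That argument works in the minimum case (it gives $\int K\overline{F}[2\overline{F}-1]\le K(m)\int\overline{F}[2\overline{F}-1]\le0$ since $K(m)\ge K(0)=0$), but in the maximum case it only yields $\int K\overline{F}[2\overline{F}-1]\ge K(m)\int\overline{F}[2\overline{F}-1]$ with $K(m)\le0$ and a nonnegative second factor, which is inconclusive. Your decomposition $D_0=\int_0^\infty k(s)\Psi(s)\,{\rm d}s$ buys exactly what is missing: in the minimum case the NWU bound $\Psi(s)\le-\int_s^{2s}\overline{F}\le0$ together with $k\ge0$ gives the sign rigorously, and in the maximum case you correctly abandon NBU for this step and instead use $\Psi(0)=\mathsf{E}(X)-\mathsf{GMD}(X)\le0$ (the Gini hypothesis) plus the unimodal shape of $\Psi$ to get $\Psi\le0$ everywhere, which with $k\le0$ gives $D_0\ge0$. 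Your two closing caveats --- that strict negativity of $D_0$ is what prevents the infimum from being attained only in the limit $\alpha\to\alpha_I$, and that for decreasing $K$ (hence $K\le0$) the distortion $h_\alpha(u)=u\,e^{-\alpha K(\overline{F}^{-1}(u))}$ does not satisfy Assumption 2.2 along $\alpha\to+\infty$, so the boundary behaviour of $\mathcal I$ must be re-examined --- point at genuine soft spots of the published statement and proof rather than at gaps in your own argument. The only detail you should add is a justification of the Fubini interchange (e.g.\ $k$ of constant sign and $\int_0^\infty|K(x)|\,\overline{F}(x)\,{\rm d}x<\infty$), which is routine at the level of rigour adopted in the paper.
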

	\begin{proof}
		If $X$ is NWU (NBU), then 
		$$
			\int_{0}^{+\infty}\overline{F}(x)\left[2\overline{F}(x)-1\right]{\rm d}x\le(\ge)\, 2\int_{0}^{+\infty}\overline{F}(2x){\rm d}x-\mathsf{E}(X)=0.	
		$$	
		Hence, from Eq.\ (\ref{eq:der,additive}), if $K(x)$ is increasing (decreasing) in $x>0$, then
		$$
		\lim_{\alpha\to\alpha_I}
		\frac{\rm d}{{\rm d}\alpha}\eta_X(\alpha)
		=\int_{0}^{+\infty}K(x)\overline{F}(x)\left[2\overline{F}(x)-1\right]{\rm d}x\le(\ge)\,0.
		$$
		The thesis follows recalling that, from Eq.\ (\ref{eq:rmd,gini index}), one has  $\mathsf{G}(X)\le(\ge)\,\frac12$ if and only if $\mathsf{GMD}(X)\le(\ge)\,\mathsf{E}(X)$.
	\end{proof}
	%

\section{Results under dependence}\label{sec:main results}
In Section \ref{sec:independence} we considered sufficient conditions for the existence of the minimum (or the maximum) for the distorted Gini's mean difference. 
In this section we face the same problem under the different hypothesis that $X$ and $X_\alpha$ are dependent according to the survival copula $\widehat{C}_\theta\in\mathcal{\hat{C}}^\theta$, i.e.\ regarding the copula-distorted Gini's mean difference. 
A sketch about the main results of Section \ref{sec:main results} is provided in Table \ref{tab:main results}.
\begin{table}[h]
	\caption{\small A sketch about the main results of Section \ref{sec:main results}, with indications of the involved quantities. In particular, $\partial_2 \widehat{C}_{\theta}(u,u)$ is defined in Eq.\ (\ref{eq:der C}). \label{tab:main results}}
	\scalebox{0.80}{
		\begin{tabular}{l@{\hspace{0,5cm}}l@{\hspace{0,5cm}}l}
			\toprule
			Theorems & Relevant topics & Results \\
			\midrule
			Theorem \ref{prop:v<v0} & $\widehat{C}_\theta\left(u,h_\alpha(u)\right), h_\alpha(u)$, Assumptions 2.1, 2.3, 2.4  & $\nu_X(\theta,\alpha)\le(\ge)\,\nu_X(\theta,\alpha_I)$\\
			Theorem \ref{prop:v<E}&$\widehat{C}_\theta\left(u,h_\alpha(u)\right), h_\alpha(u)$, Assumptions 2.3 and 2.4 &	$\nu_X(\theta,\alpha)\le(\ge)\,\mathsf{E}(X)-l$\\
			Theorem \ref{prop:derivata}& $\partial_2 \widehat{C}_{\theta}(u,u),\tilde{q}(u),\frac{\partial}{\partial\alpha}h_{\alpha}(u)$ & $\displaystyle{\lim_{\alpha\to\alpha_I}}
			\frac{\partial}{\partial\alpha}\nu_X(\theta,\alpha)\ge(\le)\,0$ \\
			Theorem \ref{prop:bound}& $h_\alpha(u),\tilde{q}(u)$  & $\nu_X(\theta,\alpha_I)\le\,\mathsf{E}(X)-l$ \\
			\bottomrule
		\end{tabular}
	}
\end{table}
\par
In this section we assume that	$X$ is a random variable 	with s.f.\ $\overline{F}(x)$, d.f.\ $f(x)$, for $x\in\mathbb{R}$, and d.q.d.f.\ $\tilde{q}(u)$, for $u\in[0,1]$, 
having support in $(l,r)$ and having finite expected value $\mathsf{E}(X)$.
Moreover, we assume that $X_\alpha\in\mathcal{F}_{X,h_\alpha}$. 
\begin{theorem}\label{prop:v<v0}
	Let $(X,X_\alpha)$ be the random vector with survival copula $\widehat{C}_\theta\in\mathcal{\hat{C}}^\theta$.
	Under the Assumption 2.1, suppose that for all $\alpha\neq\alpha_I$ and for all $\theta\in\Theta$, one has for all  $u\in\left[0,\frac12\right]$ 
	\begin{equation}\label{cond1:prop1}
		\widehat{C}_{\theta}(u,h_{\alpha}(u))+\widehat{C}_{\theta}(1-u,h_{\alpha}(1-u))-\left[\widehat{C}_{\theta}(u,u)+\widehat{C}_{\theta}(1-u,1-u)\right]-
		\frac12[h_{\alpha}(u)+h_{\alpha}(1-u)-1]=\delta_\theta(u)
	\end{equation}
	and
	\begin{equation}\label{cond1b:prop1}
	\int_{0}^{\frac12}\tilde{q}(1-u)\delta_\theta(u)\text{d}u\ge(\le)\,0.
	\end{equation}
	Let
	\begin{equation}\label{cond2:prop1}
		\widehat{C}_{\theta}(u,h_{\alpha}(u))-\widehat{C}_{\theta}(u,u)\ge(\le)\,\frac12\left[h_{\alpha}(u)-u\right],\qquad\forall u\in\left[0,\frac12\right],\forall\theta\in\Theta.
	\end{equation}
	If Assumption 2.3 (Assumption 2.4) holds, then 
	\begin{equation}\label{eq:min,prop1}
		\nu_X(\theta,\alpha)\le\nu_X(\theta,\alpha_I),\qquad\forall\theta\in\Theta,\forall\alpha\in\mathcal{I}.
	\end{equation} 
	If Assumption 2.4 (Assumption 2.3) holds, then 
	\begin{equation}\label{eq:max,prop1}
		\nu_X(\theta,\alpha)\ge\nu_X(\theta,\alpha_I),\qquad\forall\theta\in\Theta,\forall\alpha\in\mathcal{I}.
	\end{equation} 
\end{theorem}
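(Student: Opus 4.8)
The plan is to write the gap $\nu_X(\theta,\alpha_I)-\nu_X(\theta,\alpha)$ as a single integral over $\left[0,\tfrac12\right]$ and then read off its sign from the three hypotheses. First I would use Assumption 2.1 together with the assumed continuity of $\nu_X(\theta,\cdot)$ to fix the reference value: since $\lim_{\alpha\to\alpha_I}h_\alpha(u)=u$, the representation (\ref{variability measure, d.q.d.f.}) gives
$$
\nu_X(\theta,\alpha_I)=\int_0^1\tilde{q}(u)\left[2u-2\widehat{C}_\theta(u,u)\right]{\rm d}u.
$$
Subtracting (\ref{variability measure, d.q.d.f.}) at a generic $\alpha$ and abbreviating $A(u)=\widehat{C}_\theta(u,h_\alpha(u))-\widehat{C}_\theta(u,u)$ and $B(u)=u-h_\alpha(u)$, the integrand collapses to $\tilde{q}(u)\left[2A(u)+B(u)\right]$, so that
$$
\nu_X(\theta,\alpha_I)-\nu_X(\theta,\alpha)=\int_0^1\tilde{q}(u)\left[2A(u)+B(u)\right]{\rm d}u.
$$

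Next I would fold the integral at $\tfrac12$ via the substitution $u\mapsto 1-u$ on $\left[\tfrac12,1\right]$, turning the right-hand side into $\int_0^{1/2}\left\{\tilde{q}(u)P(u)+\tilde{q}(1-u)P(1-u)\right\}{\rm d}u$ with $P(u)=2A(u)+B(u)$. The decisive algebraic step is the splitting
$$
\tilde{q}(u)P(u)+\tilde{q}(1-u)P(1-u)=\tilde{q}(1-u)\left[P(u)+P(1-u)\right]+\left[\tilde{q}(u)-\tilde{q}(1-u)\right]P(u).
$$
A short computation based on the identity $h_\alpha(u)+h_\alpha(1-u)-1=-\left(B(u)+B(1-u)\right)$ shows that $P(u)+P(1-u)$ equals exactly twice the quantity $\delta_\theta(u)$ introduced in (\ref{cond1:prop1}). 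Hence the first summand is $2\tilde{q}(1-u)\delta_\theta(u)$, whose integral is precisely the object controlled by (\ref{cond1b:prop1}), while the second summand is the product $\left[\tilde{q}(u)-\tilde{q}(1-u)\right]P(u)$, in which the first factor is signed by Assumption 2.3 or 2.4 and the second factor is signed by (\ref{cond2:prop1}) (note that (\ref{cond2:prop1}) is nothing but $P(u)\ge(\le)\,0$).

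Finally I would sign the two pieces. To obtain (\ref{eq:min,prop1}) the whole integral must be nonnegative: the $\delta_\theta$-piece is nonnegative by (\ref{cond1b:prop1}), and the product piece is nonnegative because Assumption 2.3 gives $\tilde{q}(u)\ge\tilde{q}(1-u)$ to be paired with $P(u)\ge0$ from (\ref{cond2:prop1}), while Assumption 2.4 gives $\tilde{q}(u)\le\tilde{q}(1-u)$ to be paired with $P(u)\le0$; the reversed choices of sign in (\ref{cond1b:prop1}) and (\ref{cond2:prop1}) yield a nonpositive integral and hence (\ref{eq:max,prop1}). I expect the main obstacle to be organizational rather than analytic: one must establish the identity $P(u)+P(1-u)=2\delta_\theta(u)$ correctly and then match each parenthetical alternative with the right assumption and the right conclusion, keeping in mind that the two summands are governed by different hypotheses and therefore their signs do not switch in unison.
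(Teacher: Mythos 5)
Your proof is correct and follows essentially the same route as the paper: the authors likewise reduce $\nu_X(\theta,\alpha_I)-\nu_X(\theta,\alpha)$ to $\int_0^{1/2}s(u)\left[\tilde q(u)-\tilde q(1-u)\right]{\rm d}u$ plus a $\delta_\theta$-integral, where $s(u)$ is exactly your $P(u)=2A(u)+B(u)$, and then sign the two pieces via (\ref{cond1b:prop1}), (\ref{cond2:prop1}) and Assumptions 2.3/2.4. The only discrepancy is that your folding identity $P(u)+P(1-u)=2\delta_\theta(u)$ yields $2\int_0^{1/2}\tilde q(1-u)\delta_\theta(u)\,{\rm d}u$ where the paper writes this term without the factor $2$; your constant is the correct one, and the difference is immaterial for the sign conclusions.
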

\begin{proof}
	From Assumption 2.1 and Eqs.\ (\ref{variability measure, d.q.d.f.}) and (\ref{cond1:prop1}), with few calculations one has 
	$$
	\nu_X(\theta,\alpha_I)-\nu_X(\theta,\alpha)= \int_{0}^{\frac12}s(u) \left[\tilde{q}(u)-\tilde{q}(1-u)\right]{\rm d}u+\int_{0}^{\frac12}\tilde{q}(1-u)\delta_\theta(u)\text{d}u,\qquad\forall\theta\in\Theta,
	$$
	where we have set $s(u)\coloneqq u-h_{\alpha}(u)+2\left[\widehat{C}_{\theta}(u,h_{\alpha}(u))-\widehat{C}_{\theta}(u,u)\right]$, for $u\in\left[0,\frac12\right]$. 
	Making use of Eqs.\ (\ref{cond1b:prop1}) and (\ref{cond2:prop1}), Eq.\ (\ref{eq:min,prop1}) follows from Assumption 2.3 (Assumption 2.4). In the same way Eq.\ (\ref{eq:max,prop1}) can be obtained.
\end{proof}
\begin{theorem}\label{prop:v<E}
	Let $(X,X_\alpha)$ be the random vector with survival copula $\widehat{C}_\theta\in\mathcal{\hat{C}}^\theta$.
	Suppose that for all $\theta\in\Theta$ and for all $\alpha\in\mathcal{I}$ one has
	\begin{equation}\label{cond1:prop2}
		\widehat{C}_{\theta}(u,h_{\alpha}(u))+\widehat{C}_{\theta}(1-u,h_{\alpha}(1-u))-\frac12[h_{\alpha}(u)+h_{\alpha}(1-u)]=\delta_\theta(u),\qquad\forall u\in\left[0,\frac12\right],
	\end{equation}
	and
	\begin{equation}\label{cond1b:prop2}
	\int_{0}^{\frac12}\tilde{q}(1-u)\delta_\theta(u)\text{d}u\ge(\le)\,0.
	\end{equation}
	Let
	\begin{equation}\label{cond2:prop2}
		\widehat{C}_{\theta}(u,h_{\alpha}(u))\ge(\le)\,\frac12h_{\alpha}(u),\qquad\forall u\in\left[0,\frac12\right].
	\end{equation}
	If Assumption 2.3 (Assumption 2.4) holds, then
	\begin{equation}\label{eq:min,prop2}
		\nu_X(\theta,\alpha)\le\mathsf{E}(X)-l,\qquad\forall\theta\in\Theta,\forall\alpha\in\mathcal{I}.
	\end{equation} 
	If Assumption 2.4 (Assumption 2.3) holds, then
	\begin{equation}\label{eq:max,prop2}
		\nu_X(\theta,\alpha)\ge\mathsf{E}(X)-l,\qquad\forall\theta\in\Theta,\forall\alpha\in\mathcal{I}.
	\end{equation} 
\end{theorem}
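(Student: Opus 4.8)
The plan is to mirror the proof of Theorem~\ref{prop:v<v0}, simply replacing the reference point $\nu_X(\theta,\alpha_I)$ there by $\mathsf{E}(X)-l$ here. First I would record a quantile representation of the reference value. Starting from $\mathsf{E}(X)-l=\int_l^r\overline{F}(x)\,{\rm d}x$ and substituting $u=\overline{F}(x)$ (so that ${\rm d}x=-\tilde q(u)\,{\rm d}u$), one gets $\mathsf{E}(X)-l=\int_0^1 u\,\tilde q(u)\,{\rm d}u$; equivalently this is the value that Eq.~(\ref{variability measure, d.q.d.f.}) returns for the boundary choice $h\equiv0$, using the copula property $\widehat{C}_\theta(u,0)=0$. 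Subtracting Eq.~(\ref{variability measure, d.q.d.f.}) from this and cancelling the $u$-terms gives
$$\mathsf{E}(X)-l-\nu_X(\theta,\alpha)=\int_0^1\tilde q(u)\left[2\,\widehat{C}_\theta\left(u,h_\alpha(u)\right)-h_\alpha(u)\right]{\rm d}u.$$

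Next I would symmetrize about $u=\tfrac12$. Splitting $\int_0^1=\int_0^{1/2}+\int_{1/2}^1$ and applying $u\mapsto 1-u$ on the second piece, I set $A(u)\coloneqq 2\,\widehat{C}_\theta(u,h_\alpha(u))-h_\alpha(u)$, so the integrand over $[0,\tfrac12]$ becomes $\tilde q(u)A(u)+\tilde q(1-u)A(1-u)$. The key algebraic observation is that $A(u)+A(1-u)=2\delta_\theta(u)$, with $\delta_\theta$ exactly as defined in Eq.~(\ref{cond1:prop2}). Substituting $A(1-u)=2\delta_\theta(u)-A(u)$ and collecting terms yields the master identity
$$\mathsf{E}(X)-l-\nu_X(\theta,\alpha)=\int_0^{1/2}A(u)\left[\tilde q(u)-\tilde q(1-u)\right]{\rm d}u+2\int_0^{1/2}\tilde q(1-u)\,\delta_\theta(u)\,{\rm d}u.$$

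It then remains to sign the right-hand side. Condition~(\ref{cond2:prop2}) is exactly the statement that the factor $A(u)=2\,\widehat{C}_\theta(u,h_\alpha(u))-h_\alpha(u)$ is nonnegative (nonpositive) on $[0,\tfrac12]$; Assumption~2.3 (Assumption~2.4) fixes the sign of $\tilde q(u)-\tilde q(1-u)$; and condition~(\ref{cond1b:prop2}) fixes the sign of the second integral. When the signs are matched so that $A(u)$ and $\tilde q(u)-\tilde q(1-u)$ agree (making the first integrand nonnegative) together with $\int_0^{1/2}\tilde q(1-u)\delta_\theta(u)\,{\rm d}u\ge0$, the whole expression is $\ge0$, which is Eq.~(\ref{eq:min,prop2}); the opposite matching ($A(u)$ and $\tilde q(u)-\tilde q(1-u)$ of opposite sign, and the $\delta_\theta$-integral $\le0$) makes it $\le0$, which is Eq.~(\ref{eq:max,prop2}).

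The computations are routine; the only delicate point is the bookkeeping of the four sign combinations, so that each choice of $\ge/\le$ in (\ref{cond1b:prop2})--(\ref{cond2:prop2}) together with Assumption~2.3 or 2.4 lines up with the correct inequality in (\ref{eq:min,prop2})--(\ref{eq:max,prop2}). Concretely, one must verify that $A(u)$ and $\tilde q(u)-\tilde q(1-u)$ carry matching signs in the minimizing case and opposite signs in the maximizing case, with the $\delta_\theta$-term controlled separately by (\ref{cond1b:prop2}); the identity $\widehat{C}_\theta(u,0)=0$ used to identify the reference value $\mathsf{E}(X)-l$ is the only copula-specific input.
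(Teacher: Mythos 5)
Your proof is correct and follows essentially the same route as the paper's: the same reduction to a quantile integral and symmetrization about $u=\tfrac{1}{2}$, yielding the identity $\mathsf{E}(X)-l-\nu_X(\theta,\alpha)=\int_0^{1/2}b(u)\left[\tilde q(u)-\tilde q(1-u)\right]{\rm d}u+c\int_0^{1/2}\tilde q(1-u)\,\delta_\theta(u)\,{\rm d}u$ with $b(u)=2\widehat{C}_{\theta}(u,h_{\alpha}(u))-h_{\alpha}(u)$, followed by the same sign bookkeeping via Eqs.~(\ref{cond1b:prop2})--(\ref{cond2:prop2}) and Assumptions 2.3/2.4. (Your constant $c=2$ on the $\delta_\theta$-term is the one actually implied by the definition in Eq.~(\ref{cond1:prop2}), whereas the paper's displayed identity has $c=1$; since only the sign of that integral is used, this changes nothing.)
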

\begin{proof}
	From Eqs.\ (\ref{variability measure, d.q.d.f.}) and (\ref{cond1:prop2}), after few calculations we obtain	
	$$
	\mathsf{E}(X)-l-\nu_X(\theta,\alpha)=\int_{0}^{\frac12}b(u)\left[\tilde{q}(u)-\tilde{q}(1-u)\right]{\rm d}u+\int_{0}^{\frac12}\tilde{q}(1-u)\delta_\theta(u)\text{d}u,\qquad\forall\theta\in\Theta,
	$$
	where we have set $b(u)\coloneqq2\widehat{C}_{\theta}(u,h_{\alpha}(u))-h_{\alpha}(u)$, for $u\in\left[0,\frac12\right]$. 
	Making use of Eqs.\ (\ref{cond1b:prop2}) and (\ref{cond2:prop2}), 
	Eq.\ (\ref{eq:min,prop2}) follows from Assumption 2.3 (Assumption 2.4). In a similar way Eq.\ (\ref{eq:max,prop2}) can be obtained.
\end{proof}
The next result follows from Remark \ref{rem:GMD,E}{\rm (b)}, recalling that $\alpha_0$ is defined as in Assumption 2.2.
\begin{corollary}\label{cor:4.1}
	Under the Assumption 2.2, Eqs.\ (\ref{eq:min,prop2}) and (\ref{eq:max,prop2}) become respectively 
	\begin{equation}\label{eq:1cor4.1}
		\nu_X(\theta,\alpha)\le\nu_X(\theta,\alpha_0),\qquad\forall\theta\in\Theta,\forall\alpha\in\mathcal{I},
	\end{equation}
	\begin{equation}\label{eq:2cor4.1}
		\nu_X(\theta,\alpha)\ge\nu_X(\theta,\alpha_0),\qquad\forall\theta\in\Theta,\forall\alpha\in\mathcal{I}.
	\end{equation}
	In addition, under the Assumption 2.1, one has 
	\begin{equation}\label{eq:min,prop2, cor}
		\nu_X(\theta,\alpha_I)\le\,\nu_X(\theta,\alpha_0),\qquad\forall\theta\in\Theta,
	\end{equation}
	\begin{equation}\label{eq:max,prop2, cor}
		\nu_X(\theta,\alpha_I)\ge\,\nu_X(\theta,\alpha_0),\qquad\forall\theta\in\Theta.
	\end{equation}
\end{corollary}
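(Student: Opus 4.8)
The plan is to reduce everything to Theorem~\ref{prop:v<E} by identifying the constant $\mathsf{E}(X)-l$ appearing on its right-hand sides with the boundary value $\nu_X(\theta,\alpha_0)$. First I would invoke Remark~\ref{rem:GMD,E}{\rm (b)}, which under Assumption~2.2 gives $\lim_{\alpha\to\alpha_0}\nu_X(\theta,\alpha)=\mathsf{E}(X)-l$ for every $\theta\in\Theta$. Since $\nu_X(\theta,\alpha)$ is assumed continuous in $\alpha$ and $\alpha_0\in{\rm cl}(\mathcal{I})$, this limit is precisely $\nu_X(\theta,\alpha_0)$, so that $\nu_X(\theta,\alpha_0)=\mathsf{E}(X)-l$ for all $\theta\in\Theta$.

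Substituting this identity into the right-hand sides of Eqs.~(\ref{eq:min,prop2}) and (\ref{eq:max,prop2}) of Theorem~\ref{prop:v<E} immediately turns them into Eqs.~(\ref{eq:1cor4.1}) and (\ref{eq:2cor4.1}), respectively, valid for all $\theta\in\Theta$ and all $\alpha\in\mathcal{I}$ under the corresponding case (Assumption~2.3 or Assumption~2.4) inherited from that theorem. This establishes the first part of the statement.

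For the additional claims, I would specialize the two inequalities just obtained to $\alpha=\alpha_I$. Under Assumption~2.1 one has $\alpha_I\in{\rm cl}(\mathcal{I})$, and again by continuity of $\nu_X(\theta,\cdot)$ one may pass to the limit: since $\nu_X(\theta,\alpha)\le\nu_X(\theta,\alpha_0)$ for every $\alpha\in\mathcal{I}$ by Eq.~(\ref{eq:1cor4.1}), letting $\alpha\to\alpha_I$ yields $\nu_X(\theta,\alpha_I)\le\nu_X(\theta,\alpha_0)$, which is Eq.~(\ref{eq:min,prop2, cor}); the reverse inequality Eq.~(\ref{eq:max,prop2, cor}) follows in the same way from Eq.~(\ref{eq:2cor4.1}). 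The argument is essentially bookkeeping, and the only point requiring care is that both $\alpha_0$ and $\alpha_I$ may lie on the boundary of $\mathcal{I}$, so the symbols $\nu_X(\theta,\alpha_0)$ and $\nu_X(\theta,\alpha_I)$ must be read as the (existing) limits of $\nu_X(\theta,\alpha)$, which is legitimate under the standing continuity hypothesis on $\nu_X$.
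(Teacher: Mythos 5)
Your proposal is correct and follows exactly the route the paper intends: the paper's entire justification is the one-line remark that the corollary "follows from Remark \ref{rem:GMD,E}(b)", i.e.\ identifying $\mathsf{E}(X)-l$ with $\nu_X(\theta,\alpha_0)$ via the limit and the standing continuity assumption, then passing to the limit $\alpha\to\alpha_I$ for the additional inequalities. Your write-up simply makes explicit (and correctly handles) the boundary-point bookkeeping that the paper leaves implicit.
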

As in the previous section, when both Eqs.\ (\ref{eq:min,prop1}) and (\ref{eq:1cor4.1}) hold for
$\alpha\in\left(\min\{\alpha_I,\alpha_0\},\max\{\alpha_I,\alpha_0\}\right)$, it means that for any fixed $\theta\in\Theta$, there exists at least an $\alpha'\in\mathcal{I}$ such that $\nu_X(\theta,\alpha)$ attains a local minimum in $\alpha=\alpha'$. Similarly, when both Eqs.\ (\ref{eq:max,prop1}) and (\ref{eq:2cor4.1}) hold for $\alpha\in\left(\min\{\alpha_I,\alpha_0\},\max\{\alpha_I,\alpha_0\}\right)$, there exists a local maximum of $\nu_X(\theta,\alpha)$ in $\alpha=\alpha'$ for any fixed $\theta\in\Theta$.
\begin{remark}\label{remark:uniformly}
If $X$ is uniformly distributed over $[l,r]$, $l,r\in\mathbb{R}$, the d.q.d.f.\ is given by  $\tilde{q}(u)=r-l>0$, $u\in[0,1]$. Therefore, one can show that the thesis of Theorems \ref{prop:v<v0} and \ref{prop:v<E} and Corollary \ref{cor:4.1} follow with a similar procedure without using Assumptions 2.3 or 2.4.
\end{remark}
\par
For $u,v\in[0,1]$, we adopt the following notation 
\begin{equation}\label{eq:der C}
	\partial_2 \widehat{C}_{\theta}(u,v)\coloneqq\frac{\partial}{\partial x_2}
	\widehat{C}_{\theta}(x_1,x_2)\Big|_{(x_1,x_2)=(u,v)},\qquad\forall\theta\in\Theta. 
\end{equation}
As in Theorem \ref{prop:derivata, independence}, we now obtain sufficient conditions in order to specify the sign of ${\displaystyle\lim_{\alpha\to\alpha_I}}\frac{\partial}{\partial\alpha}\nu_X(\theta,\alpha)$, where due to Eq.\ (\ref{variability measure, d.q.d.f.}) and by differentiation under the integral sign, one has
\begin{equation}\label{eq:derivative variability measure}
	\frac{\partial}{\partial\alpha}\nu_X(\theta,\alpha)=\int_{0}^{1}\tilde{q}(u)\frac{\partial}{\partial\alpha}h_\alpha(u)\left[1-2\partial_2 \widehat{C}_{\theta}(u,h_\alpha(u))\right]{\rm d}u,\qquad\theta\in\Theta,\,\alpha\in\mathcal{I}.
\end{equation}
\begin{theorem}\label{prop:derivata}
	Let $(X,X_\alpha)$ be the random vector with survival copula $\widehat{C}_\theta\in\mathcal{\hat{C}}^\theta$. 
	Suppose that 
	\begin{equation}\label{cond1:prop3}
		\partial_2\widehat{C}_{\theta}(1-u,1-u)+\partial_2\widehat{C}_{\theta}(u,u)=1,\qquad\forall u\in\left[0,\frac12\right],\forall\theta\in\Theta,
	\end{equation}
	and 
	\begin{equation}\label{cond2:prop3}
		\partial_2\widehat{C}_{\theta}(u,u)\le(\ge)\,\frac12,\qquad\forall u\in\left[0,\frac12\right],\forall\theta\in\Theta.
	\end{equation}
	If
	\begin{equation}\label{cond3:min,prop3}
		\lim_{\alpha\to\alpha_I}\tilde{q}(u)\frac{\partial}{\partial\alpha}h_{\alpha}(u)\le(\ge)\,\lim_{\alpha\to\alpha_I}\tilde{q}(1-u)\frac{\partial}{\partial\alpha}h_{\alpha}(1-u),\qquad\forall u\in\left[0,\frac12\right]
	\end{equation}
	then
	\begin{equation}\label{eq:min,prop3}
		\lim_{\alpha\to\alpha_I}
		\frac{\partial}{\partial\alpha}\nu_X(\theta,\alpha)\le0,\qquad\forall\theta\in\Theta.
	\end{equation} 
	If
	\begin{equation}\label{cond3:max,prop3}
		\lim_{\alpha\to\alpha_I}\tilde{q}(u)\frac{\partial}{\partial\alpha}h_{\alpha}(u)\ge(\le)\,\lim_{\alpha\to\alpha_I}\tilde{q}(1-u)\frac{\partial}{\partial\alpha}h_{\alpha}(1-u),\qquad\forall u\in\left[0,\frac12\right]
	\end{equation}
	then
	\begin{equation}\label{eq:max,prop3}
		\lim_{\alpha\to\alpha_I}
		\frac{\partial}{\partial\alpha}\nu_X(\theta,\alpha)\ge0,\qquad\forall\theta\in\Theta.
	\end{equation} 
\end{theorem}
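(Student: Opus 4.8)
The plan is to follow the blueprint of the proof of Theorem \ref{prop:derivata, independence}, adapting it to the presence of the survival copula. I would start from the derivative representation (\ref{eq:derivative variability measure}) and pass the limit $\alpha\to\alpha_I$ inside the integral. Under Assumption 2.1 we have $h_\alpha(u)\to u$, and since the survival copula is continuously differentiable (a standing assumption of the paper), $\partial_2\widehat{C}_\theta(u,h_\alpha(u))\to\partial_2\widehat{C}_\theta(u,u)$. Writing $\psi(u)\coloneqq\lim_{\alpha\to\alpha_I}\tilde{q}(u)\frac{\partial}{\partial\alpha}h_\alpha(u)$ for the quantity that already appears in conditions (\ref{cond3:min,prop3}) and (\ref{cond3:max,prop3}), this yields
$$
\lim_{\alpha\to\alpha_I}\frac{\partial}{\partial\alpha}\nu_X(\theta,\alpha)=\int_0^1\psi(u)\left[1-2\,\partial_2\widehat{C}_\theta(u,u)\right]{\rm d}u.
$$

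Next I would fold the integral onto $\left[0,\frac12\right]$ through the substitution $u\mapsto 1-u$ on $\left[\frac12,1\right]$, producing the symmetrized integrand $\psi(u)\left[1-2\,\partial_2\widehat{C}_\theta(u,u)\right]+\psi(1-u)\left[1-2\,\partial_2\widehat{C}_\theta(1-u,1-u)\right]$. The crucial step is condition (\ref{cond1:prop3}), which gives $\partial_2\widehat{C}_\theta(1-u,1-u)=1-\partial_2\widehat{C}_\theta(u,u)$ and hence the antisymmetry $1-2\,\partial_2\widehat{C}_\theta(1-u,1-u)=-\left[1-2\,\partial_2\widehat{C}_\theta(u,u)\right]$. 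This collapses the two terms into a single product:
$$
\lim_{\alpha\to\alpha_I}\frac{\partial}{\partial\alpha}\nu_X(\theta,\alpha)=\int_0^{\frac12}\left[1-2\,\partial_2\widehat{C}_\theta(u,u)\right]\left[\psi(u)-\psi(1-u)\right]{\rm d}u.
$$
Note that this is exactly the copula analogue of the factor $(1-2u)$ appearing in Theorem \ref{prop:derivata, independence}, recovered by setting $\widehat{C}_\theta(u,v)=uv$, for which $\partial_2\widehat{C}_\theta(u,u)=u$ and condition (\ref{cond1:prop3}) holds trivially.

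Finally, the sign follows pointwise on $\left[0,\frac12\right]$. Condition (\ref{cond2:prop3}) fixes the sign of $1-2\,\partial_2\widehat{C}_\theta(u,u)$ (nonnegative when $\partial_2\widehat{C}_\theta(u,u)\le\frac12$, nonpositive in the reversed case), while condition (\ref{cond3:min,prop3}) fixes the sign of $\psi(u)-\psi(1-u)$. In both pairings the two $\le(\ge)$ choices are aligned so that the product is $\le 0$ throughout, giving (\ref{eq:min,prop3}); the maximum statement (\ref{eq:max,prop3}) follows identically from (\ref{cond3:max,prop3}). I expect the only delicate point to be the interchange of limit and integral in the first step, which I would justify by dominated convergence, consistent with the paper's standing assumption that differentiation under the integral sign is permitted. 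The genuinely new ingredient, beyond the independence case, is the identity (\ref{cond1:prop3}): it is precisely what forces the diagonal copula derivative to behave like the linear factor $1-2u$, making the whole symmetrization argument go through.
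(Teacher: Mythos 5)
Your argument is correct and coincides with the paper's own proof: both start from Eq.\ (\ref{eq:derivative variability measure}), pass to the limit, fold the integral onto $\left[0,\frac12\right]$, and use the antisymmetry forced by Eq.\ (\ref{cond1:prop3}) to reach the single-product integrand $\left[1-2\,\partial_2\widehat{C}_{\theta}(u,u)\right]\left[\psi(u)-\psi(1-u)\right]$, whose sign is then fixed by Eqs.\ (\ref{cond2:prop3}) and (\ref{cond3:min,prop3}) or (\ref{cond3:max,prop3}). You have merely written out the ``few calculations'' the paper leaves implicit, including the justification for interchanging limit and integral.
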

\begin{proof}
	In the assumption (\ref{cond1:prop3}), making use of Eq.\ (\ref{eq:derivative variability measure}), with few calculations one has 
	$$
	\lim_{\alpha\to\alpha_I}
	\frac{\partial}{\partial\alpha}\nu_X(\theta,\alpha)=\int_{0}^{\frac12}\left[1-2\partial_2\widehat{C}_{\theta}(u,u)\right]\left\{\lim_{\alpha\to\alpha_I}\left[\tilde{q}(u)\frac{\partial}{\partial\alpha}h_{\alpha}(u)-\tilde{q}(1-u)\frac{\partial}{\partial\alpha}h_{\alpha}(1-u)\right]\right\}{\rm d}u.
	$$
	Therefore, Eq.\ (\ref{eq:min,prop3}) follows from Eqs.\ (\ref{cond2:prop3}) and (\ref{cond3:min,prop3}).
	In the same way Eq.\ (\ref{eq:max,prop3}) follows from Eqs.\ (\ref{cond2:prop3}) and (\ref{cond3:max,prop3}).
\end{proof}
It easy to see that the independence copula satisfies Eqs.\ (\ref{cond1:prop3}) and (\ref{cond2:prop3}). Therefore Theorem \ref{prop:derivata} for $\frac{\partial}{\partial\alpha}\nu_X(\theta,\alpha)$  generalizes Theorem \ref{prop:derivata, independence} for $\frac{\rm d}{{\rm d}\alpha}\eta_X(\alpha)$ .
\par
Recalling Assumptions 2.1 and 2.2, if Eqs.\ (\ref{eq:min,prop2, cor}) and (\ref{eq:min,prop3}) hold, then there exists a local minimum of $\nu_X(\theta,\alpha)$, due to the continuity of $\nu_X$, for any fixed $\theta\in\Theta$.
For the same reasons, if Eqs.\ (\ref{eq:max,prop2, cor}) and (\ref{eq:max,prop3}) hold, then there exists the local maximum of $\nu_X(\theta,\alpha)$.
\par
	\begin{lemma}
	For a copula function $C(u,v)$, for $u,v\in[0,1]$, one has
	\begin{equation}\label{eq:bound diagonal}
		C(u,u)\ge \max\{2u-1,0\},\qquad\forall u\in[0,1].
	\end{equation}
	\end{lemma}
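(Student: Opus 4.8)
The plan is to derive the claim as the diagonal restriction of the Fr\'echet--Hoeffding lower bound $C(u,v)\ge\max\{u+v-1,0\}$, which holds for every copula $C$. Once this two-variable bound is established, setting $v=u$ immediately yields $C(u,u)\ge\max\{2u-1,0\}$, which is exactly Eq.\ (\ref{eq:bound diagonal}).

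To prove the two-variable bound, I would invoke the two defining features of a copula: its grounding and uniform margins, $C(u,0)=C(0,v)=0$, $C(u,1)=u$, $C(1,v)=v$, together with its $2$-increasing property, which states that the $C$-volume of any rectangle $[u_1,u_2]\times[v_1,v_2]\subseteq[0,1]^2$ is nonnegative. Applying the latter to the rectangle $[u,1]\times[v,1]$ gives
$$
C(1,1)-C(1,v)-C(u,1)+C(u,v)\ge 0,
$$
and substituting the margin values $C(1,1)=1$, $C(1,v)=v$, $C(u,1)=u$ rearranges this to $C(u,v)\ge u+v-1$. Since $C$ is a bivariate distribution function it is nonnegative, so also $C(u,v)\ge 0$; combining the two inequalities yields $C(u,v)\ge\max\{u+v-1,0\}$, and the diagonal specialization completes the argument.

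An equivalent probabilistic route, available here because all copulas are assumed continuously differentiable, is to let $(U,V)$ be a pair of standard uniform variables with joint c.d.f.\ $C$, so that $C(u,v)=\mathsf{P}(U\le u,V\le v)$. The Bonferroni inequality $\mathsf{P}(A\cap B)\ge\mathsf{P}(A)+\mathsf{P}(B)-1$ applied to $A=\{U\le u\}$ and $B=\{V\le v\}$ again gives $C(u,v)\ge u+v-1$, while nonnegativity of probability gives $C(u,v)\ge 0$. There is no genuine obstacle in either approach; the only point requiring care is to invoke the $2$-increasing property rather than mere coordinatewise monotonicity, since it is precisely the nonnegativity of the rectangle volume that produces the term $u+v-1$.
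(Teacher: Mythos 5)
Your proposal is correct and follows essentially the same route as the paper: both obtain Eq.\ (\ref{eq:bound diagonal}) by setting $v=u$ in the Fr\'echet--Hoeffding lower bound $C(u,v)\ge\max\{u+v-1,0\}$. The only difference is that the paper simply cites this bound from Nelsen's book, whereas you additionally derive it from the $2$-increasing property (or Bonferroni's inequality), which is a correct but inessential elaboration.
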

	\begin{proof}
	The proof immediately follows when $u=v$ in the left inequality of the following Fr\'echet-Hoeffding bounds 
	$$
		\max\{u+v-1,0\}\le C(u,v)\le\min\{u,v\},  
	$$
	that hold for every copula $C(u,v)$ and for all $u,v\in[0,1]$ (see \cite{nelsen:2007}).
	\end{proof}
In the next theorem we illustrate other sufficient conditions leading to an inequality as given in Eq.\ (\ref{eq:min,prop2, cor}).
\begin{theorem}\label{prop:bound}
	Let $(X,X_\alpha)$ be the random vector with survival copula $\widehat{C}_\theta\in\mathcal{\hat{C}}^\theta$.
	Suppose that $h_{\alpha}(u)$ satisfies Assumptions 2.1 and 2.2 for all $u\in[0,1]$. 
	If Assumption 2.4 holds and if
	\begin{equation}\label{eq:prop bound}
		\int_{\frac12}^{1}\tilde{q}(u)(4u-3){\rm d}u\ge 0,
	\end{equation}
	then 
	\begin{equation*}
		\nu_X(\theta,\alpha_I)\le\,\mathsf{E}(X)-l,\qquad\forall\theta\in\Theta.		
	\end{equation*}
\end{theorem}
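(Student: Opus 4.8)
The plan is to evaluate $\nu_X(\theta,\alpha_I)$ explicitly and reduce the claim to a single integral inequality involving only $\tilde q$ and the diagonal section $\widehat C_\theta(u,u)$. First I would use Assumption 2.1: letting $\alpha\to\alpha_I$ in Eq.\ (\ref{variability measure, d.q.d.f.}) and substituting $h_{\alpha_I}(u)=u$ gives $\nu_X(\theta,\alpha_I)=2\int_0^1\tilde q(u)\,[u-\widehat C_\theta(u,u)]\,{\rm d}u$. On the other hand, since $X$ has support $(l,r)$ one has $\mathsf E(X)-l=\int_l^r\overline F(x)\,{\rm d}x$, and the change of variable $u=\overline F(x)$ (so that ${\rm d}x=-\tilde q(u)\,{\rm d}u$) yields $\mathsf E(X)-l=\int_0^1 u\,\tilde q(u)\,{\rm d}u$; this is exactly $\nu_X(\theta,\alpha_0)$ under Assumption 2.2, which is why that assumption is included and which links the conclusion to Eq.\ (\ref{eq:min,prop2, cor}). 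Subtracting, the desired inequality $\nu_X(\theta,\alpha_I)\le\mathsf E(X)-l$ becomes equivalent to $\int_0^1\tilde q(u)\,[u-2\widehat C_\theta(u,u)]\,{\rm d}u\le0$.

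Next I would control the integrand from above using the Fr\'echet-Hoeffding lower bound of Eq.\ (\ref{eq:bound diagonal}), namely $\widehat C_\theta(u,u)\ge\max\{2u-1,0\}$. Because $\tilde q(u)=1/f(\overline F^{-1}(u))>0$, this bound transfers pointwise: $\tilde q(u)[u-2\widehat C_\theta(u,u)]\le\tilde q(u)[u-2\max\{2u-1,0\}]$, so it suffices to show $\int_0^1\tilde q(u)[u-2\max\{2u-1,0\}]\,{\rm d}u\le0$. I would then split the range at $u=\frac12$: on $[0,\frac12]$ the maximum is $0$ and the integrand is $\tilde q(u)\,u$, while on $[\frac12,1]$ it equals $\tilde q(u)(2-3u)$. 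Folding the first piece onto $[\frac12,1]$ via $u\mapsto 1-u$ turns the whole expression into $\int_{1/2}^1\bigl[\tilde q(1-u)(1-u)+\tilde q(u)(2-3u)\bigr]\,{\rm d}u$.

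Finally I would invoke Assumption 2.4. For $u\in[\frac12,1]$ it gives $\tilde q(1-u)\le\tilde q(u)$, and since $1-u\ge0$ we obtain $\tilde q(1-u)(1-u)\le\tilde q(u)(1-u)$; substituting this bound collapses the bracket to $\tilde q(u)\bigl[(1-u)+(2-3u)\bigr]=\tilde q(u)(3-4u)$. Hence the integral is at most $\int_{1/2}^1\tilde q(u)(3-4u)\,{\rm d}u=-\int_{1/2}^1\tilde q(u)(4u-3)\,{\rm d}u$, which is $\le0$ precisely by hypothesis (\ref{eq:prop bound}). Chaining the three bounds proves $\nu_X(\theta,\alpha_I)\le\mathsf E(X)-l$ for every $\theta\in\Theta$. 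The only delicate point is orchestrating the three reductions so their inequality directions line up: the Fr\'echet-Hoeffding step requires a \emph{lower} bound on $\widehat C_\theta$, the folding step manufactures a $(1-u)$ factor of the correct nonnegative sign so that Assumption 2.4 applies, and what remains is exactly the integral assumed nonnegative in Eq.\ (\ref{eq:prop bound}). Notably, no monotonicity of $h_\alpha$ beyond Assumption 2.1 is needed for the inequality itself.
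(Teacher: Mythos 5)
Your proof is correct and follows essentially the same route as the paper's: both reduce the claim to the sign of $\int_0^1\tilde q(u)[2\widehat C_\theta(u,u)-u]\,{\rm d}u$, apply the Fr\'echet--Hoeffding lower bound on the diagonal, split and fold at $u=\tfrac12$, invoke Assumption 2.4 to replace $\tilde q(1-u)$ by $\tilde q(u)$, and land exactly on hypothesis (\ref{eq:prop bound}). The only difference is sign convention (you bound $\nu_X(\theta,\alpha_I)-(\mathsf{E}(X)-l)$ from above rather than its negative from below), and your side remarks on the roles of Assumptions 2.1 and 2.2 are accurate.
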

\begin{proof}
	From Assumptions 2.1 and 2.2, making use of Eq.\ (\ref{variability measure, d.q.d.f.}), one has 
	$$
	\begin{aligned}
		\mathsf{E}(X)-l-\nu_X(\theta,\alpha_I)&=\int_{0}^{1}\tilde{q}(u)\left[2\widehat{C}_\theta(u,u)-u\right]{\rm d}u\\
		&\ge\int_{\frac12}^{1}\tilde{q}(u)(3u-2){\rm d}u-\int_{\frac12}^{1}\tilde{q}(1-u)(1-u){\rm d}u\\
		&\ge\int_{\frac12}^{1}\tilde{q}(u)(4u-3){\rm d}u,
	\end{aligned}
	$$
	where the inequalities come from Eq.\ (\ref{eq:bound diagonal}) and Assumption 2.4, respectively. 
	Finally, the thesis follows from Eq.\ (\ref{eq:prop bound}).	
\end{proof}
\begin{example}
	Let $X$ be a random variable with s.f.\ $\overline{F}(x)=1-x^2$, for $x\in[0,1]$ and d.q.d.f.\ $\tilde{q}(u)=1/{2(1-u)^{\frac12}}$, for $u\in[0,1]$.
	Let $X_\alpha\in\mathcal{F}_{X,h_\alpha}$, with $h_\alpha\in\mathcal{H}_\alpha$ satisfying the power hazard model {\rm (iv)} of Table \ref{distortion functions}.
	Since $\overline{F}^{-1}(u)=(1-u)^{\frac12}$, for $u\in[0,1]$, one has 
	$$
	h_\alpha(u)=1-(1-u)^{\alpha},\qquad u\in[0,1].
	$$
	Consider the Farlie-Gumbel-Morgenstern (FGM) family of survival copulas, that is expressed by
	$$
	\widehat{C}_\theta(u,v)=uv\left(1+\theta(1-u)(1-v)\right),\qquad\theta\in[-1,1].
	$$
	Therefore, Eq.\ (\ref{variability measure, d.q.d.f.}) becomes
	\begin{equation}\label{variability measure, es bound}
		\nu_X(\theta,\alpha)= \frac13 + \frac{1}{1+2\alpha}-\frac{2}{3+2\alpha}-\frac{16\alpha(4 + 3\alpha)\theta}{(3 + 2\alpha)(5 + 2\alpha)(3 + 4\alpha)(5 + 4\alpha)},\qquad\theta\in[-1,1],\,\alpha\in\mathcal{I}.
	\end{equation}
	\noindent
	Since $h_\alpha$ satisfies Eq.\ (\ref{eq:min,prop3}) and since Eq.\ (\ref{eq:prop bound}) holds, there exists the minimum for the $\nu_X(\theta,\alpha)$ given in Eq.\ (\ref{variability measure, es bound}). This is confirmed by the plot in Figure \ref{fig:variability measure, es bound}.
\end{example}
\begin{figure}[t] 
	\begin{center}
		\includegraphics[scale=0.5]{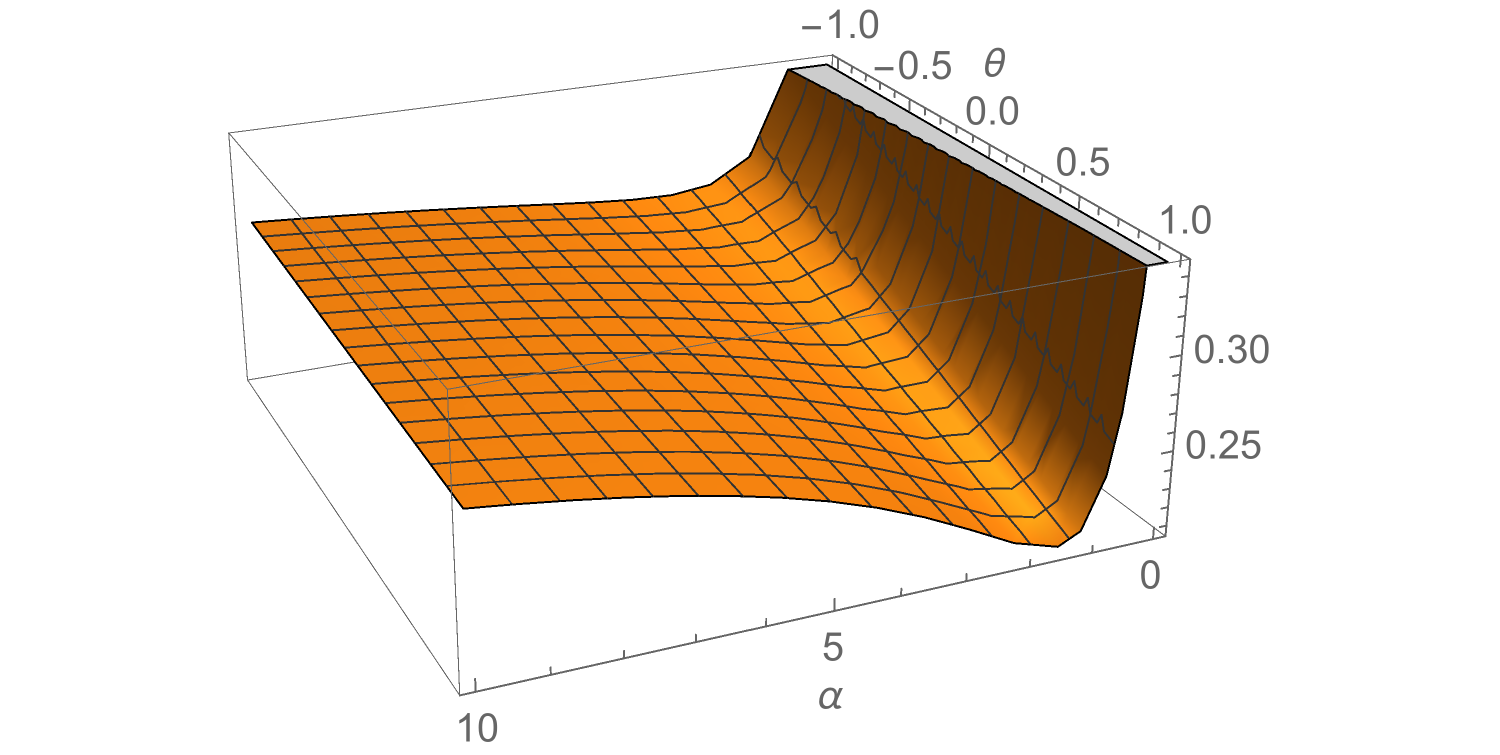}
	\end{center}
	\caption{\small Plot of $\nu_X(\theta,\alpha)$ given in Eq.\ (\ref{variability measure, es bound}) for $\theta\in[-1,1]$ and $\alpha\in[0,10]$.}
	\label{fig:variability measure, es bound}
\end{figure}

\section{Conclusions and future purposes}\label{sec: conclusions}
	In this paper we generalized the Gini's mean difference in both the directions of the distributions and of the dependences.
	We have studied sufficient conditions in order to determine the minimum, or the maximum, for the corresponding new measures of distance.
	We have also illustrated some examples in the context of hazard models.
	\par  
	Future developments can be oriented to study the measure $\nu_X(\theta,\alpha)$ for fixed distortions, in order to focus on the dependence expressed by the copula, with special attention to the case when $\alpha =\alpha_I$, i.e., when $X$ and $X_{\alpha}$  
	are identically distributed. 
	\par  
	It is worth noting that the Gini's index $\mathsf{G}(X)$, given in Eq.\ (\ref{eq:rmd,gini index}), can be suitably extended to the pair of (possibly dependent) random variables $X$ and $X_\alpha$ studied so far. In this respect, under the same assumptions of Eq.\ (\ref{eq:copula-distorted GMD}), we define the {\em copula-distorted Gini's index of $X$} as 
	\begin{equation}\label{eq:copula-distorted Gini index}
		\mathcal{GI}_X(\theta,\alpha)=\frac{\nu_X(\theta,\alpha)}{\mathsf{E}(X)+\mathsf{E}(X_\alpha)},
		\qquad\theta\in\Theta,\; \alpha\in\mathcal{I}.
	\end{equation}
	Clearly, if $X$ and $X_\alpha$ are independent, due to Eqs.\ (\ref{eq:nu=eta}), we refer to the ``distorted Gini's index of $X$",  which is obtained by replacing $\nu_X(\theta, \alpha)$ with $\eta_X(\alpha)$ in the right-hand-side of (\ref{eq:copula-distorted Gini index}). 
	The analysis and applications of the relative measure introduced in (\ref{eq:copula-distorted Gini index}) may be the object of forthcoming investigations.

\section*{Declaration of Competing Interest} 
The authors declare that they have no known competing financial interests or personal relationships that could have appeared to influence the work reported in this paper.

\section*{Data availability} 
No data was used for the research described in the article.

\section*{Acknowledgements} 
{\small
	A.D.C. and M.C. are members of the group GNCS of INdAM (Istituto Nazionale di Alta Matematica).
	F.P. is member of the group GNAMPA of INdAM.
	This work is partially supported by PRIN 2022, project “Anomalous Phenomena on Regular and Irregular Domains: Approximating Complexity for the Applied Sciences", and PRIN 2022 PNRR, project “Stochastic Models in Biomathematics and Applications". F.P. carried out this study within the FAIR - Future Artificial Intelligence Research and received funding from the European Union Next-GenerationEU (Piano Nazionale di Ripresa e Resilienza (PNRR) – Missione 4 Componente 2, Investimento 1.3 – D.D. 1555 11/10/2022, PE00000013). 
}

\end{document}